\theoremstyle{plain}
\newtheorem{thm}{Theorem}
\newtheorem{lem}{Lemma}
\newtheorem{cor}{Corollary}
\newtheorem{prop}{Proposition}
\theoremstyle{definition}
\newtheorem{dfn}{Definition}
\newtheorem{ex}{Example}
\theoremstyle{remark}
\newtheorem{rem}{Remark}
\newtheorem*{ackn}{Acknowledgment}
\title{Monomial functions, normal polynomials and polynomial equations}
\author{Eszter Gselmann and Mehak Iqbal}
\begin{document}
%\nocite{*}
\maketitle

\begin{abstract}
In this paper we consider generalized monomial functions $f, g\colon \mathbb{F}\to \mathbb{C}$ (of possibly different degree) that also fulfill 
\[
f(P(x))= Q(g(x)) 
\qquad 
\left(x\in \mathbb{F}\right), 
\]
where $P\in \mathbb{F}[x]$ and $Q\in \mathbb{C}[x]$ are given (classical) polynomials. 
\end{abstract}

\section{Introduction and preliminaries}

Perhaps G.~Ancochea was the first who studied additive mappings from a ring into another ring which also fulfill a `polynomial equation'. More concretely in \cite{Anc42} he described those additive functions that preserve squares. Later, these results were strengthened by (among others) Kaplansky
\cite{Kap} and Jacobson--Rickart \cite{JR}.

 Recall that if  $R, R'$ are rings, then the mapping $\varphi:R\rightarrow R'$ is called a \emph{homomorphism} if
\[
 \varphi(a+b)=\varphi(a)+\varphi(b)
\qquad \left(a, b\in R\right)
\]
and
\[
 \varphi(ab)=\varphi(a)\varphi(b)
\qquad \left(a, b\in R\right).
\]
Furthermore, the function $\varphi:R\to R'$ is an
\emph{anti-homomorphism} if
\[
 \varphi(a+b)=\varphi(a)+\varphi(b)
\qquad \left(a, b\in R\right)
\]
and
\[
 \varphi(ab)=\varphi(b)\varphi(a)
\qquad \left(a, b\in R\right).
\]

Henceforth, $\mathbb{N}$ will denote the set of the positive
integers. Let $n\in\mathbb{N}, n\geq 2$ be fixed. 
The function $\varphi:R\rightarrow R'$ is called an $n$-Jordan
homomorphism if
\[
\varphi(a+b)=\varphi(a)+\varphi(b) \qquad \left(a, b\in R\right)
\]
and
\[
 \varphi(a^{n})=\varphi(a)^{n}
\qquad \left(a\in R\right).
\]
In case $n=2$ we speak about homomorphisms
and Jordan homomorphisms, respectively. It was G.~Ancochea who
firstly dealt with the connection of Jordan homomorphisms and
homomorphisms, see \cite{Anc42}. These results were
generalized and extended in several ways, see for instance
\cite{JR}, \cite{Kap}, \cite{Zel68}. 
In \cite{Her} I.N.~Herstein showed that 
if $\varphi$ is a Jordan homomorphism of a ring $R$
\emph{onto} a prime ring $R'$ of characteristic different from $2$
and $3$, then either $\varphi$ is a homomorphism or an
anti-homomorphism. 

Besides homomorphisms, derivations also play a key role in the theory of rings and fields. Concerning this notion, we will follow \cite[Chapter 14]{Kuc}. 

Let $Q$ be a ring and let $P$ be a subring of $Q$.
A function $d\colon P\rightarrow Q$ is called a \emph{derivation}\index{derivation} if it is additive,
i.e. 
\[
d(x+y)=d(x)+d(y)
\quad
\left(x, y\in P\right)
\]
and also satisfies the so-called \emph{Leibniz rule}\index{Leibniz rule}, i.e.  equation
\[
d(xy)=d(x)y+xd(y)
\quad
\left(x, y\in P\right). 
\]

It is well-known that in case of additive functions, Hamel bases play an important role. 
As \cite[Theorem 14.2.1]{Kuc} shows in case of derivations, algebraic bases are fundamental. 

\begin{thm}\label{T14.2.1}
Let $(\mathbb{K}, +,\cdot)$ be a field of characteristic zero, let $(\mathbb{F}, +,\cdot)$
be a subfield of $(\mathbb{K}, +,\cdot)$, let $S$ be an algebraic base of $\mathbb{K}$ over $\mathbb{F}$,
if it exists, and let $S=\emptyset$ otherwise.
Let $f\colon \mathbb{F}\to \mathbb{K}$ be a derivation.
Then, for every function $u\colon S\to \mathbb{K}$,
there exists a unique derivation $g\colon \mathbb{K}\to \mathbb{K}$
such that $g \vert_{\mathbb{F}}=f$ and $g \vert_{S}=u$.
\end{thm}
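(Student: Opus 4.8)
The plan is to build $g$ in two stages, following the two layers of the tower $\mathbb{F}\subseteq \mathbb{F}(S)\subseteq \mathbb{K}$: first I extend $f$ across the purely transcendental part $\mathbb{F}(S)/\mathbb{F}$, where the prescribed values $u$ on $S$ supply exactly the available degrees of freedom, and then across the algebraic part $\mathbb{K}/\mathbb{F}(S)$, where — because $\operatorname{char}\mathbb{K}=0$ — the extension is forced and hence automatically unique. Throughout, for a polynomial $p(x)=\sum_i a_i x^i$ with coefficients in an intermediate field and a derivation $d$ defined on those coefficients, I write $p^{d}(x)=\sum_i d(a_i)x^i$ for the polynomial obtained by differentiating the coefficients and $p'(x)=\sum_i i\,a_i x^{i-1}$ for the formal derivative; the identity $d(p(\alpha))=p^{d}(\alpha)+p'(\alpha)\,d(\alpha)$ will be used repeatedly. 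When $S=\emptyset$ the first stage is vacuous, and when $\mathbb{K}=\mathbb{F}(S)$ the second is; the general argument specializes correctly to both.

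For the transcendental stage I would first define $g$ on the polynomial ring $\mathbb{F}[S]$. Every element is uniquely a finite $\mathbb{F}$-combination of monomials in the elements of $S$ — uniqueness being precisely the algebraic independence of $S$ over $\mathbb{F}$ — so I may define $g$ on a monomial by applying $f$ to its coefficient and the Leibniz rule to the product of the $s\in S$ appearing, setting $g(s):=u(s)$, and then extend additively. Algebraic independence guarantees well-definedness, and a direct check on products of monomials shows $g$ is additive and satisfies the Leibniz rule on $\mathbb{F}[S]$. I then pass to the field of fractions $\mathbb{F}(S)$ via the forced quotient rule $g(P/R)=(g(P)R-P\,g(R))/R^2$, verifying that this is independent of the chosen representation and is again a derivation. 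By construction $g|_{\mathbb{F}}=f$ and $g|_{S}=u$, and any derivation on $\mathbb{F}(S)$ with these restrictions must coincide with it, since its values on $\mathbb{F}[S]$ are forced by additivity and the Leibniz rule and its values on quotients by the quotient rule.

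It remains to extend the derivation from $L:=\mathbb{F}(S)$ to $\mathbb{K}$. Since $S$ is an algebraic base, $\mathbb{K}$ is algebraic over $L$, and since $\operatorname{char}\mathbb{K}=0$ every such extension is separable. The heart of the matter is the single-element step: if the derivation is already defined on an intermediate field $M$ with $L\subseteq M\subseteq \mathbb{K}$ and $\alpha\in\mathbb{K}\setminus M$ has minimal polynomial $p$ over $M$, then differentiating $p(\alpha)=0$ forces $g(\alpha)=-p^{g}(\alpha)/p'(\alpha)$. Here $p'(\alpha)\neq 0$ exactly because $p$ is separable, so the right-hand side is defined and the value of $g(\alpha)$ is uniquely determined. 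For existence one checks that the additive map $M[x]\to\mathbb{K}$, $P\mapsto P^{g}(\alpha)+P'(\alpha)\cdot(-p^{g}(\alpha)/p'(\alpha))$, satisfies a Leibniz rule over the evaluation homomorphism and annihilates $p$, hence descends to a well-defined derivation on $M(\alpha)=M[\alpha]$ extending the one on $M$. A standard Zorn's lemma argument on the poset of extensions $(M,g_M)$ then produces a maximal extension whose domain must be all of $\mathbb{K}$, since the single-element step lets one enlarge any proper intermediate domain, contradicting maximality.

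The main obstacle is this algebraic stage, and within it the uniqueness-forcing role of separability: the entire construction hinges on $p'(\alpha)\neq 0$, which is where the hypothesis $\operatorname{char}\mathbb{K}=0$ is indispensable (in positive characteristic an inseparable $\alpha$ would obstruct existence along some choices and destroy uniqueness). The remaining work is bookkeeping — verifying that the map descends well-definedly to $M[x]/(p)$ and that the Zorn tower is consistent across the possibly infinite chain of intermediate fields — but these become routine once the single-element step and its forced value are in hand. Uniqueness of the final $g$ follows by combining the forced quotient and Leibniz values on $\mathbb{F}(S)$ with the forced algebraic values: any two derivations extending $f$ and agreeing with $u$ on $S$ already coincide on $\mathbb{F}(S)$, and then, by the forced formula at each algebraic element, on all of $\mathbb{K}$.
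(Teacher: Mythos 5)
The paper states this result without proof, citing it verbatim as Theorem 14.2.1 of Kuczma's monograph \cite{Kuc}, so there is no in-paper argument to compare against; your proof is correct and is essentially the classical argument given in that source. Your two-stage extension --- first across the purely transcendental layer $\mathbb{F}(S)/\mathbb{F}$, where algebraic independence of $S$ makes the Leibniz-extended definition well defined and the values $u$ supply the free parameters, then across the algebraic layer via the forced value $g(\alpha)=-p^{g}(\alpha)/p'(\alpha)$ (legitimate since $p'(\alpha)\neq 0$ by separability in characteristic zero) together with a Zorn's lemma exhaustion, with uniqueness read off from the forced formulas --- is exactly the standard route, and the details you check (well-definedness modulo the minimal polynomial, the degenerate cases $S=\emptyset$ and $\mathbb{K}=\mathbb{F}(S)$) are the right ones.
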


\subsection{Generalized polynomial functions}

While proving our results, the so-called Polarization formula for multi-additive functions and the symmetrization method will play a key role. In this subsection the most important notations and statements are summarized. Here we follow the monograph \cite{Sze91}.

\begin{dfn}
 Let $G, S$ be commutative semigroups, $n\in \mathbb{N}$ and let $A\colon G^{n}\to S$ be a function.
 We say that $A$ is \emph{$n$-additive} if it is a homomorphism of $G$ into $S$ in each variable.
 If $n=1$ or $n=2$ then the function $A$ is simply termed to be \emph{additive}
 or \emph{bi-additive}, respectively.
\end{dfn}

The \emph{diagonalization} or \emph{trace} of an $n$-additive
function $A\colon G^{n}\to S$ is defined as
 \[
  A^{\ast}(x)=A\left(x, \ldots, x\right)
  \qquad
  \left(x\in G\right).
 \]
As a direct consequence of the definition each $n$-additive function
$A\colon G^{n}\to S$ satisfies
\[
 A(x_{1}, \ldots, x_{i-1}, kx_{i}, x_{i+1}, \ldots, x_n)
 =
 kA(x_{1}, \ldots, x_{i-1}, x_{i}, x_{i+1}, \ldots, x_{n})
 \qquad 
 \left(x_{1}, \ldots, x_{n}\in G\right)
\]
for all $i=1, \ldots, n$, where $k\in \mathbb{N}$ is arbitrary. The
same identity holds for any $k\in \mathbb{Z}$ provided that $G$ and
$S$ are groups, and for $k\in \mathbb{Q}$, provided that $G$ and $S$
are linear spaces over the rationals. For the diagonalization of $A$
we have
\[
 A^{\ast}(kx)=k^{n}A^{\ast}(x)
 \qquad
 \left(x\in G\right).
\]

The above notion can also be extended for the case $n=0$ by letting 
$G^{0}=G$ and by calling $0$-additive any constant function from $G$ to $S$. 

One of the most important theoretical results concerning
multiadditive functions is the so-called \emph{Polarization
formula}, that briefly expresses that every $n$-additive symmetric
function is \emph{uniquely} determined by its diagonalization under
some conditions on the domain as well as on the range. Suppose that
$G$ is a commutative semigroup and $S$ is a commutative group. The
action of the {\emph{difference operator}} $\Delta$ on a function
$f\colon G\to S$ is defined by the formula
\[\Delta_y f(x)=f(x+y)-f(x)
\qquad
\left(x, y\in G\right). \]
Note that the addition in the argument of the function is the
operation of the semigroup $G$ and the subtraction means the inverse
of the operation of the group $S$.

\begin{thm}[Polarization formula]\label{Thm_polarization}
 Suppose that $G$ is a commutative semigroup, $S$ is a commutative group, $n\in \mathbb{N}$.
 If $A\colon G^{n}\to S$ is a symmetric, $n$-additive function, then for all
 $x, y_{1}, \ldots, y_{m}\in G$ we have
 \[
  \Delta_{y_{1}, \ldots, y_{m}}A^{\ast}(x)=
  \left\{
  \begin{array}{rcl}
   0 & \text{ if} & m>n \\
   n!A(y_{1}, \ldots, y_{m}) & \text{ if}& m=n.
  \end{array}
  \right.
 \]

\end{thm}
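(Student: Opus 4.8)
The plan is to reduce the iterated difference to a single application of $\Delta$ and then induct on the degree $n$. First I would establish the fundamental single-step identity. Expanding $A^*(x+y)=A(x+y,\dots,x+y)$ by additivity in each of the $n$ slots yields a sum over the $2^n$ ways of choosing $x$ or $y$ in each slot, and symmetry of $A$ lets me collect these according to the number $k$ of slots occupied by $y$, each class having $\binom{n}{k}$ members. This gives
\[
A^*(x+y)=\sum_{k=0}^{n}\binom{n}{k}A(\underbrace{y,\dots,y}_{k},\underbrace{x,\dots,x}_{n-k}),
\]
and subtracting the $k=0$ term $A^*(x)$ produces
\[
\Delta_{y}A^*(x)=\sum_{k=1}^{n}\binom{n}{k}A(\underbrace{y,\dots,y}_{k},\underbrace{x,\dots,x}_{n-k}).
\]
The crucial observation is that, for fixed $y$, each summand is $\binom{n}{k}$ times the diagonalization of the symmetric $(n-k)$-additive function $B_k\colon (x_1,\dots,x_{n-k})\mapsto A(y,\dots,y,x_1,\dots,x_{n-k})$; in other words, one difference operator turns the trace of an $n$-additive function into a linear combination of traces of \emph{strictly lower} degree.

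Next I would prove both cases simultaneously by induction on $n$. The base case $n=1$ is immediate: here $A^*=A$ is additive, so $\Delta_{y_1}A^*(x)=A(y_1)$ is constant in $x$, which equals $1!\,A(y_1)$ when $m=1$ and is annihilated by any further difference when $m\geq 2$. For the inductive step, since $S$ is a commutative group the operators $\Delta_{y_i}$ commute, so I peel off the innermost one and write, using the single-step identity,
\[
\Delta_{y_1,\dots,y_m}A^*=\Delta_{y_2,\dots,y_m}\bigl(\Delta_{y_1}A^*\bigr)=\sum_{k=1}^{n}\binom{n}{k}\,\Delta_{y_2,\dots,y_m}B_k^{\ast},
\]
where each $B_k$ is symmetric and $(n-k)$-additive, hence of degree $n-k<n$, so the induction hypothesis applies to every summand with $m-1$ difference operators in place.

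The argument then closes by a degree count. If $m>n$, then for every $k\geq 1$ we have $m-1\geq n>n-k$, so the induction hypothesis forces $\Delta_{y_2,\dots,y_m}B_k^{\ast}=0$ and the whole sum vanishes. If $m=n$, then the remaining $n-1$ operators match the degree $n-k$ only for $k=1$, giving $(n-1)!\,B_1(y_2,\dots,y_n)=(n-1)!\,A(y_1,\dots,y_n)$, while for $k\geq 2$ one has $n-1>n-k$ and the corresponding term drops out. Hence
\[
\Delta_{y_1,\dots,y_n}A^*=\binom{n}{1}(n-1)!\,A(y_1,\dots,y_n)=n!\,A(y_1,\dots,y_n),
\]
as claimed. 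The part requiring the most care is the bookkeeping in the inductive step—verifying that exactly the $k=1$ summand survives in the equality case while all others are killed by the lower-degree version of the theorem—together with the observation that makes the induction on $n$ possible at all, namely that a single difference lowers the degree of the underlying multiadditive function by at least one. The only structural point to watch is that all subtractions occur in the range $S$, which is a group, so the difference operators and their iterates are well defined even though $G$ is merely a semigroup.
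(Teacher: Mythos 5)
Your proof is correct; the paper itself states Theorem \ref{Thm_polarization} without proof, importing it from Sz\'ekelyhidi's monograph \cite{Sze91}, and your route---the binomial expansion of $A^{\ast}(x+y)$ via symmetry and $n$-additivity, followed by induction on the degree $n$ with the observation that one difference operator lowers the degree of the underlying multiadditive function---is exactly the standard argument for this result. The only point worth spelling out is the summand $k=n$ in the inductive step, where $B_{n}$ is $0$-additive, i.e.\ constant in $x$ (the paper's own convention for the case $n=0$), so that term is annihilated directly by any remaining difference operator rather than by the induction hypothesis, a triviality your base case already anticipates.
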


\begin{cor}
 Suppose that $G$ is a commutative semigroup, $S$ is a commutative group, $n\in \mathbb{N}$.
 If $A\colon G^{n}\to S$ is a symmetric, $n$-additive function, then for all $x, y\in G$
 \[
  \Delta^{n}_{y}A^{\ast}(x)=n!A^{\ast}(y).
\]
\end{cor}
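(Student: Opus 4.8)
The plan is to obtain the Corollary as an immediate specialization of the Polarization formula (Theorem~\ref{Thm_polarization}). The key observation is that the iterated difference operator $\Delta^{n}_{y}$ appearing in the statement is precisely the mixed operator $\Delta_{y_{1}, \ldots, y_{n}}$ of the theorem evaluated at the single common increment $y_{1}=\cdots=y_{n}=y$. Since $S$ is a commutative group, the individual operators $\Delta_{y_{i}}$ commute with one another, so the composite operator is well defined and independent of the order in which the increments are applied; in particular setting all increments equal to $y$ yields $\Delta_{y, \ldots, y}=\Delta^{n}_{y}$.

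First I would apply Theorem~\ref{Thm_polarization} with $m=n$. This gives, for every $x, y_{1}, \ldots, y_{n}\in G$,
\[
\Delta_{y_{1}, \ldots, y_{n}}A^{\ast}(x)=n!\,A(y_{1}, \ldots, y_{n}).
\]
Next I would substitute $y_{1}=\cdots=y_{n}=y$. On the left-hand side this produces $\Delta^{n}_{y}A^{\ast}(x)$ by the identification noted above, while on the right-hand side the symmetric $n$-additive function $A$ is evaluated on the diagonal, so that $A(y, \ldots, y)=A^{\ast}(y)$ by the very definition of the trace. Combining these two substitutions yields the claimed identity $\Delta^{n}_{y}A^{\ast}(x)=n!\,A^{\ast}(y)$.

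There is essentially no serious obstacle here, as the Corollary is a direct reading of the $m=n$ case of the Polarization formula. The only point that warrants care is the identification $\Delta^{n}_{y}=\Delta_{y, \ldots, y}$, which rests on the commutativity of the difference operators and hence on the hypothesis that $S$ is a commutative group; I would state this explicitly so that the diagonal specialization of the increments is fully justified. The fact that the right-hand side no longer depends on $x$ is then an automatic consequence, consistent with the $m>n$ case of the theorem which shows that one further application of $\Delta_{y}$ would annihilate $A^{\ast}$.
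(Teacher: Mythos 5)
Your proposal is correct and matches the paper's intended argument exactly: the corollary is stated as an immediate consequence of Theorem~\ref{Thm_polarization}, obtained by taking $m=n$ and setting $y_{1}=\cdots=y_{n}=y$, so that $\Delta_{y,\ldots,y}A^{\ast}(x)=n!\,A(y,\ldots,y)=n!\,A^{\ast}(y)$. Your additional remark about the commutativity of the difference operators is harmless but not strictly needed, since with all increments equal the identification $\Delta^{n}_{y}=\Delta_{y,\ldots,y}$ is definitional.
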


\begin{lem}
\label{mainfact}
  Let $n\in \mathbb{N}$ and suppose that the multiplication by $n!$ is surjective in the commutative semigroup $G$ or injective in the commutative group $S$. Then for any symmetric, $n$-additive function $A\colon G^{n}\to S$, $A^{\ast}\equiv 0$ implies that
  $A$ is identically zero, as well.
\end{lem}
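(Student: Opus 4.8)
The plan is to invoke the Polarization formula (Theorem~\ref{Thm_polarization}) with $m=n$, which for a symmetric, $n$-additive function $A$ yields
\[
\Delta_{y_{1}, \ldots, y_{n}}A^{\ast}(x)=n!A(y_{1}, \ldots, y_{n})
\qquad
\left(x, y_{1}, \ldots, y_{n}\in G\right).
\]
Since $A^{\ast}\equiv 0$ by assumption, the difference operator on the left-hand side is applied to the identically zero function, hence it vanishes. Consequently
\[
n!A(y_{1}, \ldots, y_{n})=0
\qquad
\left(y_{1}, \ldots, y_{n}\in G\right),
\]
where the product $n! A(y_{1}, \ldots, y_{n})$ is understood in the commutative group $S$.

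From this single identity the conclusion follows by splitting into the two hypotheses. If multiplication by $n!$ is injective in $S$, then since $n!\cdot 0=0$ holds in $S$, the relation $n!A(y_{1}, \ldots, y_{n})=n!\cdot 0$ forces $A(y_{1}, \ldots, y_{n})=0$ for every choice of arguments, so $A$ is identically zero.

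If instead multiplication by $n!$ is surjective in $G$, I would exploit the homogeneity of $n$-additive functions recalled in the preliminaries, namely $A(kz, y_{2}, \ldots, y_{n})=kA(z, y_{2}, \ldots, y_{n})$ for $k\in\mathbb{N}$. Given arbitrary $x_{1}, y_{2}, \ldots, y_{n}\in G$, surjectivity supplies some $z\in G$ with $x_{1}=n!z$, and then
\[
A(x_{1}, y_{2}, \ldots, y_{n})=A(n!z, y_{2}, \ldots, y_{n})=n!A(z, y_{2}, \ldots, y_{n})=0
\]
by the identity derived above. By symmetry the same argument applies to each coordinate, so $A\equiv 0$ in this case as well. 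The only point requiring a little care is the surjectivity case, where the vanishing of $A$ is recovered not immediately from $n!A\equiv 0$ but by first pulling the factor $n!$ out of one argument via the homogeneity relation; the remaining steps are a direct application of the Polarization formula.
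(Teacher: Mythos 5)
Your proof is correct and is essentially the argument the paper has in mind: the lemma is stated there without proof, as an immediate consequence of the Polarization formula (Theorem~\ref{Thm_polarization} with $m=n$, following \cite{Sze91}), and your two-case derivation --- injectivity of multiplication by $n!$ in $S$ applied directly to $n!A(y_{1},\ldots,y_{n})=0$, and surjectivity in $G$ handled by writing $x_{1}=n!z$ and pulling the factor out via the homogeneity relation $A(n!z, y_{2},\ldots,y_{n})=n!A(z,y_{2},\ldots,y_{n})$ --- is precisely that standard consequence. The only cosmetic remark is that in the surjectivity case the closing appeal to symmetry is superfluous, since factoring $n!$ out of a single coordinate, with the remaining arguments arbitrary, already yields $A\equiv 0$.
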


\begin{dfn}
 Let $G$ and $S$ be commutative semigroups, a function $p\colon G\to S$ is called a \emph{generalized polynomial} from $G$ to $S$ if it has a representation as the sum of diagonalizations of symmetric multi-additive functions from $G$ to $S$. In other words, a function $p\colon G\to S$ is a 
 generalized polynomial if and only if, it has a representation 
 \[
  p= \sum_{k=0}^{n}A^{\ast}_{k}, 
 \]
where $n$ is a nonnegative integer and $A_{k}\colon G^{k}\to S$ is a symmetric, $k$-additive function for each 
$k=0, 1, \ldots, n$. In this case we also say that $p$ is a generalized polynomial \emph{of degree at most $n$}. 

Let $n$ be a nonnegative integer, functions $p_{n}\colon G\to S$ of the form 
\[
 p_{n}= A_{n}^{\ast}, 
\]
where $A_{n}\colon G^{n}\to S$ are the so-called \emph{generalized monomials of degree $n$}. 
\end{dfn}

\begin{rem}
 Obviously, generalized monomials 
of degree $0$ are constant functions and generalized monomials of degree $1$ are additive functions. 

 Furthermore, generalized monomials of degree $2$ will be termed \emph{quadratic functions}. 
\end{rem}

\subsection{Polynomial functions}

As Laczkovich \cite{Lac04} enlightens, on groups there are several polynomial notions. One of them is that we introduced in subsection 1.2, that is the notion of generalized polynomials. 
As we will see in the forthcoming sections, not only this notion, but also that of \emph{(normal) polynomials} will be important. The definitions and results recalled here can be found in \cite{Sze91}. 

Throughout this subsection $G$ is assumed to be  a commutative group (written additively).

\begin{dfn}
{\it Polynomials} are elements of the algebra generated by additive
functions over $G$. Namely, if $n$ is a positive integer,
$P\colon\mathbb{C}^{n}\to \mathbb{C}$ is a (classical) complex
polynomial in
 $n$ variables and $a_{k}\colon G\to \mathbb{C}\; (k=1, \ldots, n)$ are additive functions, then the function
 \[
  x\longmapsto P(a_{1}(x), \ldots, a_{n}(x))
 \]
is a polynomial and, also conversely, every polynomial can be
represented in such a form.
\end{dfn}

\begin{rem}
 For the sake of easier distinction, at some places polynomials will be called normal polynomials. 
\end{rem}

\begin{rem}
 We recall that the elements of $\mathbb{N}^{n}$ for any positive integer $n$ are called
 ($n$-dimensional) \emph{multi-indices}.
 Addition, multiplication and inequalities between multi-indices of the same dimension are defined component-wise.
 Further, we define $x^{\alpha}$ for any $n$-dimensional multi-index $\alpha$ and for any
 $x=(x_{1}, \ldots, x_{n})$ in $\mathbb{C}^{n}$ by
 \[
  x^{\alpha}=\prod_{i=1}^{n}x_{i}^{\alpha_{i}}
 \]
where we always adopt the convention $0^{0}=1$. We also use the
notation $\left|\alpha\right|= \alpha_{1}+\cdots+\alpha_{n}$. With
these notations any polynomial of degree at most $N$ on the
commutative semigroup $G$ has the form
\[
 p(x)= \sum_{\left|\alpha\right|\leq N}c_{\alpha}a(x)^{\alpha}
 \qquad
 \left(x\in G\right),
\]
where $c_{\alpha}\in \mathbb{C}$ and $a=(a_1, \dots, a_n) \colon
G\to \mathbb{C}^{n}$ is an additive function. Furthermore, the
\emph{homogeneous term of degree $k$} of $p$ is
\[
 \sum_{\left|\alpha\right|=k}c_{\alpha}a(x)^{\alpha} .
\]
\end{rem}

It is easy to see that 
each polynomial, that is, any function of the form 
\[
  x\longmapsto P(a_{1}(x), \ldots, a_{n}(x)), 
 \]
where $n$ is a positive integer,
$P\colon\mathbb{C}^{n}\to \mathbb{C}$ is a (classical) complex
polynomial in
 $n$ variables and $a_{k}\colon G\to \mathbb{C}\; (k=1, \ldots, n)$ are additive functions, is a generalized polynomial. The converse however is in general not true. A complex-valued generalized polynomial $p$ defined on a commutative group $G$ is a polynomial \emph{if and only if} its variety (the linear space spanned by its translates) is of \emph{finite} dimension. 
To make the situation more clear, here we also recall Theorem 13.4 from Sz\'ekelyhidi \cite{Sze14}. 
 
 \begin{thm}\label{thm_torsion}
  The torsion free rank of a commutative group is finite \emph{if and only if} every generalized polynomial on the group is a polynomial. 
 \end{thm}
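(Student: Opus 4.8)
The plan is to reduce both implications to the characterization cited just above the statement: a complex-valued generalized polynomial is a (normal) polynomial if and only if its variety (the linear span of its translates) is finite-dimensional. Write $G$ additively, let $T$ be its torsion subgroup, and let $r$ be the torsion-free rank, so that $G\otimes_{\mathbb{Z}}\mathbb{Q}$ is a $\mathbb{Q}$-vector space of dimension $r$. The observation used throughout is that, since $\mathbb{C}$ is torsion-free and divisible, every additive $a\colon G\to\mathbb{C}$ vanishes on $T$ and is automatically $\mathbb{Q}$-linear, hence factors through $G\otimes_{\mathbb{Z}}\mathbb{Q}$; the same holds variable-by-variable for symmetric $n$-additive functions, which thus correspond to symmetric $\mathbb{Q}$-multilinear forms on $G\otimes_{\mathbb{Z}}\mathbb{Q}$.

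For the implication ``finite rank $\Rightarrow$ every generalized polynomial is a polynomial'', I would fix a basis $a_{1},\ldots,a_{r}$ of the $r$-dimensional space $\mathrm{Hom}(G,\mathbb{C})$ of additive functions. Given a symmetric $n$-additive $A\colon G^{n}\to\mathbb{C}$, the remark above identifies it with a symmetric multilinear form on an $r$-dimensional space, so it is a finite $\mathbb{C}$-combination of symmetrized products of the $a_{i}$. Passing to the diagonal expresses the monomial $A^{\ast}(x)$ as a homogeneous classical polynomial of degree $n$ in $a_{1}(x),\ldots,a_{r}(x)$. Summing over the homogeneous terms shows that every generalized polynomial $p=\sum_{k=0}^{n}A_{k}^{\ast}$ has the form $x\mapsto P(a_{1}(x),\ldots,a_{r}(x))$, i.e.\ is a normal polynomial. (Equivalently, the partial diagonalizations $x\mapsto A_{k}(x,\ldots,x,t,\ldots,t)$ that span the variety of $p$ range over finite-dimensional spaces of symmetric multilinear forms, so the variety is finite-dimensional.)

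For the converse I argue contrapositively: assuming the torsion-free rank is infinite, I construct a generalized polynomial that is not a polynomial. Choose a maximal $\mathbb{Z}$-independent subset $\{g_{i}\}_{i\in I}$ of $G$ with $I$ infinite; its images form a $\mathbb{Q}$-basis of $G\otimes_{\mathbb{Z}}\mathbb{Q}$, and let $\pi_{i}\colon G\to\mathbb{Q}\subseteq\mathbb{C}$ be the associated coordinate functionals, which are additive and satisfy $\pi_{i}(g_{j})=\delta_{ij}$. For each fixed $x\in G$ only finitely many $\pi_{i}(x)$ are nonzero, so $B(x,y)=\sum_{i\in I}\pi_{i}(x)\pi_{i}(y)$ is a well-defined symmetric bi-additive function, and its trace $B^{\ast}(x)=\sum_{i\in I}\pi_{i}(x)^{2}$ is a generalized polynomial of degree $2$. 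From $B^{\ast}(x+t)=B^{\ast}(x)+2\sum_{i}\pi_{i}(t)\pi_{i}(x)+B^{\ast}(t)$ we see that the variety of $B^{\ast}$ contains every additive function $x\mapsto\sum_{i}\pi_{i}(t)\pi_{i}(x)$; taking $t=g_{j}$ yields $\pi_{j}$ for each $j\in I$, and these are linearly independent over $\mathbb{C}$. Hence the variety of $B^{\ast}$ is infinite-dimensional, and by the cited characterization $B^{\ast}$ is not a polynomial.

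I expect the main obstacle to be this converse direction: one must produce an honest generalized polynomial and then certify that it escapes the class of normal polynomials. Well-definedness of the infinite sum (guaranteed by the finite support of each $x$ in the chosen basis) is what makes $B$ a legitimate bi-additive function, and the cleanest certificate of failure is the infinite-dimensionality of its variety. If one prefers to bypass the variety characterization here, the same conclusion follows directly by matching homogeneous parts: a normal polynomial of degree $2$ has degree-$2$ part equal to a quadratic form in finitely many additive functions $b_{1},\ldots,b_{m}$, and since these are finitely many $\mathbb{Q}$-functionals on the infinite-dimensional space $G\otimes_{\mathbb{Z}}\mathbb{Q}$ they share a nonzero common kernel vector $v$, at which that quadratic form vanishes while $B^{\ast}(v)=\sum_{i}\pi_{i}(v)^{2}>0$, a contradiction.
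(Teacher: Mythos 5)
The paper itself gives no proof of this theorem: it is recalled verbatim from Sz\'ekelyhidi \cite{Sze14} (Theorem 13.4), so the only in-paper benchmark is the criterion stated immediately before it, namely that a complex-valued generalized polynomial is a (normal) polynomial if and only if its variety is finite-dimensional. Your proof is a correct, essentially self-contained implementation of exactly that criterion. The forward direction is fine: since $\mathbb{C}$ is a $\mathbb{Q}$-vector space, $\mathrm{Hom}_{\mathbb{Z}}(G,\mathbb{C})\cong\mathrm{Hom}_{\mathbb{Q}}(G\otimes_{\mathbb{Z}}\mathbb{Q},\mathbb{C})$, symmetric $n$-additive maps vanish on torsion and correspond to symmetric $\mathbb{Q}$-multilinear forms on the $r$-dimensional space $G\otimes_{\mathbb{Z}}\mathbb{Q}$, and the coordinate expansion of such a form exhibits each generalized monomial as a classical polynomial in the $r$ coordinate functionals.

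In the converse there is one step you should tighten: from $\tau_{t}B^{\ast}-B^{\ast}=2A_{t}+B^{\ast}(t)\mathbf{1}$, where $A_{t}(x)=\sum_{i}\pi_{i}(t)\pi_{i}(x)$, it does not yet follow that $A_{t}$ itself lies in the variety --- you need the constants first. This is a two-line fix: since $B^{\ast}(g_{j})=1$, the combination $\left(\tau_{2g_{j}}B^{\ast}-B^{\ast}\right)-2\left(\tau_{g_{j}}B^{\ast}-B^{\ast}\right)=2\cdot\mathbf{1}$ puts the constants in the variety, whence each $\pi_{j}=A_{g_{j}}$ lies there; alternatively, the functions $\tau_{g_{j}}B^{\ast}-B^{\ast}=2\pi_{j}+\mathbf{1}$, $j\in I$, are already linearly independent (evaluate at $g_{k}$ and at a fresh $g_{m}$), which is all the infinite-dimensionality argument needs. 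Your alternative ending via a common kernel vector is also sound and is arguably preferable here, since it bypasses the variety characterization entirely, using only the uniqueness of the decomposition into generalized monomials (linear independence of monomials of distinct degrees, which the paper itself invokes, e.g.\ in the proof of Lemma \ref{lemma_hom}) together with the positivity of $\sum_{i}\pi_{i}(x)^{2}$; just note that the common kernel vector $v\in G\otimes_{\mathbb{Z}}\mathbb{Q}$ must be pulled back to $G$ --- some nonzero integer multiple of $v$ is the image of a group element $x$, and then $b_{j}(x)=0$ for all $j$ while $B^{\ast}(x)>0$, which completes the contradiction.
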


In the next section the lemma below will be used, see \cite[Lemma 6]{Gse22}, too. 

\begin{lem}\label{lem_monomial_deg}
Let $k$ and $n$ be positive integers and $f \colon  \mathbb{F} \to \mathbb{C}$ be a generalized monomial of degree $n$, where $\mathbb{F}$ is assumed to be a field with $\mathrm{char}(\mathbb{F}) = 0$. Then the mapping
\[
\mathbb{F}\ni x \longmapsto f(x^{k})
\]
is a generalized monomial of degree $n \cdot k$.
\end{lem}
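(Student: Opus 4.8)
The plan is to exhibit $x\mapsto f(x^{k})$ as the diagonalization of a symmetric $nk$-additive function, which is precisely what the definition of a generalized monomial of degree $nk$ demands. Since $f$ is a generalized monomial of degree $n$, I may fix a symmetric, $n$-additive function $A\colon\mathbb{F}^{n}\to\mathbb{C}$ with $f=A^{\ast}$, i.e. $f(x)=A(x,\ldots,x)$. The guiding idea is to split each of the $n$ slots of $A$, which is fed $x^{k}=\underbrace{x\cdots x}_{k}$, into $k$ separate slots, producing a function of $nk$ variables whose diagonal returns $f(x^{k})$.

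Concretely, I would index the $nk$ variables as $n$ blocks of length $k$ and set
\[
B_{0}\bigl(x_{1,1},\ldots,x_{1,k},\ldots,x_{n,1},\ldots,x_{n,k}\bigr)=A\!\left(\prod_{j=1}^{k}x_{1,j},\,\ldots,\,\prod_{j=1}^{k}x_{n,j}\right).
\]
The decisive step is to verify that $B_{0}$ is $nk$-additive. Freezing all variables except a single $x_{i,j}$, the $i$-th entry of $A$ becomes $c\cdot x_{i,j}$ with $c=\prod_{j'\neq j}x_{i,j'}$ a constant, while no other entry depends on $x_{i,j}$; the distributive law $c(x_{i,j}+y)=cx_{i,j}+cy$ together with additivity of $A$ in its $i$-th argument then yields additivity of $B_{0}$ in $x_{i,j}$. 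This is the heart of the argument, and it is exactly the point at which the hypothesis that $\mathbb{F}$ is a \emph{field} (so that multiplication distributes over addition) is indispensable; over a mere group the product would not be separately additive in each factor.

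Now $B_{0}$ is invariant under permutations inside each block and under permutations of the blocks, but not under arbitrary permutations of all $nk$ slots, so I would replace it by its symmetrization
\[
B(x_{1},\ldots,x_{nk})=\frac{1}{(nk)!}\sum_{\sigma\in S_{nk}}B_{0}\bigl(x_{\sigma(1)},\ldots,x_{\sigma(nk)}\bigr),
\]
which is a symmetric $nk$-additive function (averaging preserves multi-additivity, and dividing by $(nk)!$ is permitted since $\mathbb{C}$ has characteristic zero). Because every summand agrees on the diagonal, the diagonalization is unchanged:
\[
B^{\ast}(x)=B_{0}(x,\ldots,x)=A(x^{k},\ldots,x^{k})=f(x^{k})\qquad(x\in\mathbb{F}).
\]
Thus $f(\cdot^{k})=B^{\ast}$ is the diagonalization of a symmetric $nk$-additive map, i.e. a generalized monomial of degree $nk$, as asserted.

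I expect the only genuinely delicate point to be the claim that the resulting degree is \emph{exactly} $nk$ rather than something smaller; by Lemma~\ref{mainfact} this is equivalent to showing $f(\cdot^{k})\not\equiv 0$ whenever $f\not\equiv 0$. Here the assumption $\mathrm{char}(\mathbb{F})=0$ becomes essential, and its necessity is already visible in positive characteristic $p$: there $x\mapsto x^{p}$ is additive, so that $f(x^{p})$ would itself be additive and the predicted degree would collapse.
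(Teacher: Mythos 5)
Your construction --- feeding blockwise products into $A$, symmetrizing by averaging over $S_{nk}$, and observing that the diagonal is unchanged --- is correct and is essentially the paper's own argument: the paper quotes this lemma from \cite{Gse22} without proof, but proves the generalization Lemma~\ref{lem3} (of which this is the special case $\alpha_{1}=\cdots=\alpha_{n}=k$) by exactly this symmetrized-product construction. One small remark: under the paper's definition a generalized monomial of degree $nk$ is simply the trace of \emph{some} symmetric $nk$-additive map, so your closing concern about the degree being exactly $nk$ (and the appeal to Lemma~\ref{mainfact}) is not needed for the stated conclusion.
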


Henceforth, not only the notion of (exponential) polynomials, but also that of \emph{decomposable functions} will be used. The basics of this concept are due to 
Shulman \cite{Shu10}, besides this we heavily rely on the work of Laczkovich \cite{Lac19}. 
 
\begin{dfn}
 Let $G$ be a group and $n\in \mathbb{N}, n\geq 2$.
 A function $F\colon G^{n}\to \mathbb{C}$ is said to be
 \emph{decomposable} if it can be written as a finite sum of products
 $F_{1}\cdots F_{k}$, where all $F_{i}$ depend on disjoint sets of variables.
\end{dfn}

\begin{rem}
 Without loss of generality we can suppose that $k=2$ in the above definition, that is,
 decomposable functions are those mappings that can be written in the form
 \[
  F(x_{1}, \ldots, x_{n})= \sum_{E}\sum_{j}A_{j}^{E}B_{j}^{E}
 \]
where $E$ runs through all non-void proper subsets of $\left\{1,
\ldots, n\right\}$ and for each $E$ and $j$ the function $A_{j}^{E}$
depends only on variables $x_{i}$ with $i\in E$, while $B_{j}^{E}$
depends only on the variables $x_{i}$ with $i\notin E$.
\end{rem}

\begin{thm}\label{thm_decomposable}
Let $G$ be a commutative topological  semigroup with unit. 
A continuous function $f\colon G\to \mathbb{C}$ is a generalized exponential polynomial 
\emph{if and only if} there is a positive integer $n\geq 2$ such that the mapping
\[
G^{n} \ni (x_{1}, \ldots, x_{n}) \longmapsto f(x_1+\cdots+ x_n) 
\]
is decomposable. 
\end{thm}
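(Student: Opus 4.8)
The plan is to prove the two implications separately. Recall that a \emph{generalized exponential polynomial} is a finite sum $f = \sum_{j=1}^{k} p_{j} m_{j}$, where each $m_{j}\colon G \to \mathbb{C}$ is an exponential (i.e.\ $m_{j}(x+y) = m_{j}(x)m_{j}(y)$) and each $p_{j}$ is a generalized polynomial, say of degree $d_{j}$. The forward implication is elementary and, in fact, explains why the correct quantifier is ``there is $n \geq 2$'' rather than ``$n = 2$''; the converse (Shulman's theorem, \cite{Shu10}, for which we also lean on \cite{Lac19}) is the substantial part.

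For the direction $(\Rightarrow)$, write $p_{j} = \sum_{l=0}^{d_{j}} A_{j,l}^{\ast}$ with $A_{j,l}$ symmetric and $l$-additive, and set $n = 1 + \max_{j} d_{j}$ (and $n \geq 2$). Since each $m_{j}$ is an exponential, $m_{j}(x_{1}+\cdots+x_{n}) = \prod_{i=1}^{n} m_{j}(x_{i})$, while $l$-additivity gives
\[
A_{j,l}^{\ast}(x_{1}+\cdots+x_{n}) = \sum_{(i_{1}, \ldots, i_{l})} A_{j,l}(x_{i_{1}}, \ldots, x_{i_{l}}),
\]
the sum running over all $(i_{1}, \ldots, i_{l}) \in \{1, \ldots, n\}^{l}$. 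Substituting these into $f(x_{1}+\cdots+x_{n})$ and expanding, every summand has the shape $c \cdot A_{j,l}(x_{i_{1}}, \ldots, x_{i_{l}}) \prod_{i=1}^{n} m_{j}(x_{i})$, whose $A$-factor depends on at most $l \leq \max_{j} d_{j} < n$ of the variables. Hence some index $i_{0}$ is absent from the $A$-factor; grouping the variables by the set $E = \{i_{1}, \ldots, i_{l}\}$ and placing $i_{0}$ in the complementary block exhibits the summand as a product of a function of $\{x_{i} : i \in E\}$ and a function of $\{x_{i} : i \notin E\}$ (when $E = \emptyset$ the summand is already a product of one-variable functions, still decomposable for $n \geq 2$). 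Thus $f(x_{1}+\cdots+x_{n})$ is decomposable. Note that for $n \leq d_{j}$ a genuinely non-normal $p_{j}$ need \emph{not} produce a decomposable map, which is precisely why one must be free to enlarge $n$.

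For the converse $(\Leftarrow)$, assume $\Phi(x_{1}, \ldots, x_{n}) = f(x_{1}+\cdots+x_{n})$ is decomposable for some $n \geq 2$. Using that $G$ has a unit, I would first substitute the identity into all but two of the variables and apply single-variable difference operators, exploiting $\Delta_{y}^{(i)} \Phi = (\Delta_{y} f)(x_{1}+\cdots+x_{n})$, in order to transfer the block structure of $\Phi$ into functional constraints on $f$ itself. The argument then splits into two tasks: (i) extract from the multiplicative behaviour of $\Phi$ the finitely many exponentials $m_{1}, \ldots, m_{r}$ that can occur, showing that decomposability bounds both their number and the associated degrees; and (ii) after dividing out these exponentials, show that the residual components are annihilated by a difference operator of high order, so that by the classical characterization of generalized polynomials as the functions satisfying $\Delta_{y_{1}} \cdots \Delta_{y_{N+1}} f \equiv 0$ they are generalized polynomials. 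Continuity guarantees that the spectral-synthesis and variety arguments underlying (i) are available on the topological semigroup $G$.

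The main obstacle is step (i): proving that decomposability truly forces the exponential content of $f$ to be finite-dimensional, that is, that only finitely many exponentials appear and each with bounded degree. This is the combinatorial core and is handled by studying, for each splitting of $\{1, \ldots, n\}$ into a block $E$ and its complement, the bilinear pairing between functions of the two blocks induced by $\Phi$, and bounding its rank \emph{modulo} the exponential factors; equivalently, one shows that the variety generated by $f$ lies in the closed span of finitely many exponential-monomial modules, after which spectral synthesis on $G$ yields the finite representation $f = \sum_{j} p_{j} m_{j}$. Once this finiteness is secured, the passage in step (ii) to concrete difference equations for the polynomial coefficients is routine.
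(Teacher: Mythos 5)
First, a point of reference: the paper does not prove this statement at all. Theorem \ref{thm_decomposable} is recalled verbatim from the literature --- it is Laczkovich's characterization of generalized exponential polynomials via decomposable functions \cite{Lac19}, building on Shulman \cite{Shu10} --- so there is no in-paper proof to compare against. Your proposal must therefore stand on its own, and it does so only halfway. The forward implication $(\Rightarrow)$ is correct and complete: writing $f=\sum_j p_j m_j$, choosing $n=\max\{2,\,1+\max_j \deg p_j\}$, expanding $A_{j,l}^{\ast}(x_1+\cdots+x_n)$ by multi-additivity, and observing that each resulting $A$-factor misses at least one variable (so that the exponential factors can be split along $E=\{i_1,\ldots,i_l\}$ and its complement) is exactly the right, and essentially the only, argument; your remark that $n$ must exceed the degrees, since a term such as $A_n(x_1,\ldots,x_n)$ with all variables present need not be decomposable, is also sound.

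The converse, however, is a research plan rather than a proof, and its opening move fails as stated. If $\Phi(x_1,\ldots,x_n)=\sum_E\sum_j A_j^E B_j^E$ and you substitute the unit into all but two variables, say keeping $x_1,x_2$, then every block $E$ containing \emph{both} surviving indices contributes a term $A_j^E(x_1,x_2,e,\ldots,e)\cdot(\text{const})$, i.e.\ an unstructured two-variable function; you do \emph{not} obtain a Levi-Civita equation $f(x+y)=\sum_i g_i(x)h_i(y)$, and applying difference operators does not remove these terms, since nothing is yet known about $\Delta_y f$. This is precisely why the theorem is delicate: a naive reduction to the two-variable case would force $f$ to be a \emph{normal} exponential polynomial, contradicting the presence of genuinely generalized polynomial coefficients in the conclusion. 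Your steps (i) and (ii) --- that decomposability bounds the exponential content, and that the residual parts satisfy high-order difference equations --- are assertions of exactly the hard content of Shulman's and Laczkovich's work, not arguments for it; moreover, the appeal to spectral synthesis is not available on an arbitrary commutative topological semigroup with unit (Laczkovich's actual proof proceeds instead through the algebraic structure of decomposable functions and Levi-Civita--type equations, by induction on the number of variables, avoiding any synthesis hypothesis on $G$). As it stands, the proposal proves one implication and outlines, without supplying, the other.
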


The notion of derivations can be extended in several ways. We will employ the concept of higher order derivations according to Reich \cite{Rei98} and Unger--Reich \cite{UngRei98}. For further results on characterization theorems on higher order derivations consult e.g. \cite{Eba15, Eba17, EbaRieSah} and 
\cite{GseKisVin18}. 

\begin{dfn}
 Let $\mathbb{F}\subset \mathbb{C}$ be a field. The identically zero map is the only \emph{derivation of order zero}. For each $n\in \mathbb{N}$, an additive mapping 
 $f\colon \mathbb{F}\to \mathbb{C}$ is termed to be a \emph{derivation of order $n$}, if there exists $B\colon \mathbb{F}\times \mathbb{F}\to \mathbb{C}$ such that 
 $B$ is a bi-derivation of order $n-1$ (that is, $B$ is a derivation of order $n-1$ in each variable) and 
 \[
  f(xy)-xf(y)-f(x)y=B(x, y) 
  \qquad 
  \left(x, y\in \mathbb{F}\right). 
 \]
 The set of derivations of order $n$ of the field $\mathbb{F}$ will be denoted by $\mathscr{D}_{n}(\mathbb{F})$. 
\end{dfn}

\begin{rem}
\label{pathologic}
Since $\mathscr{D}_{0}(\mathbb{F})=\{0\}$, the only bi-derivation of order zero is the identically zero function, thus $f\in \mathscr{D}_{1}(\mathbb{F})$ if and only if
  \[
   f(xy)=xf(y)+f(x)y 
   \qquad 
   \left(x, y\in \mathbb{F}\right), 
  \]
that is, the notions of first order derivations and derivations coincide. On the other hand for any $n\in \mathbb{N}$ the set $\mathscr{D}_{n}(\mathbb{F})\setminus \mathscr{D}_{n-1}(\mathbb{F})$ is nonempty because  $d_{1}\circ \cdots \circ d_{n}\in \mathscr{D}_{n}(\mathbb{F})$, but $d_{1}\circ \cdots \circ d_{n}\notin \mathscr{D}_{n-1}(\mathbb{F})$, where $d_{1}, \ldots, d_{n}\in \mathscr{D}_{1}(\mathbb{F})$ are non-identically zero derivations.
\end{rem}

The main result of \cite{KisLac18} is Theorem 1.1 that reads in our settings as follows. 

\begin{thm}\label{thm_KisLac}
 Let $\mathbb{F}\subset \mathbb{C}$ be a field and let $n$ be a positive integer. Then, for every function $D \colon \mathbb{F}\to \mathbb{C}$, 
$D\in \mathscr{D}_{n}(\mathbb{F})$ if and only if 
$D$ is additive on $\mathbb{F}$, $D(1) = 0$, and $\dfrac{D}{\mathrm{id}}$, as a map from the group $\mathbb{F}^{\times}$ to $\mathbb{C}$, is a generalized polynomial of degree at most $n$. Here $\mathrm{id}$ stands for the identity map defined on $\mathbb{F}$.
\end{thm}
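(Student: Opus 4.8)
The plan is to prove the equivalence by induction on $n$, translating the recursive structure in the definition of $\mathscr{D}_n(\mathbb{F})$ into the difference calculus of generalized polynomials on the \emph{multiplicative} group $\mathbb{F}^{\times}$. Write $\varphi = D/\mathrm{id}$, so that $D(x)=x\varphi(x)$ for $x\in\mathbb{F}^{\times}$ (and $D(0)=0$ is forced by additivity, so $\varphi$ carries all the information of $D$). The computational heart of the argument is the identity
\[
B(x,y) := D(xy)-xD(y)-D(x)y = xy\bigl(\varphi(xy)-\varphi(x)-\varphi(y)\bigr) \qquad (x,y\in\mathbb{F}^{\times}),
\]
obtained by substituting $D(t)=t\varphi(t)$. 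Dividing by $x$ gives $B(x,y)/x = y\bigl(\Delta_{y}\varphi(x)-\varphi(y)\bigr)$, where $\Delta_{y}$ is the difference operator on $(\mathbb{F}^{\times},\cdot)$, that is $\Delta_{y}\varphi(x)=\varphi(xy)-\varphi(x)$. Thus, for fixed $y$, the map $x\mapsto B(x,y)/x$ is a fixed nonzero scalar multiple of $\Delta_{y}\varphi$ plus a constant.

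First I would dispose of the easy bookkeeping. Additivity of $D$ immediately yields that $B(\cdot,y)$ is additive for each fixed $y$, and a direct check shows $B$ is symmetric, so being a bi-derivation of order $n-1$ reduces to being a derivation of order $n-1$ in the first variable alone. Moreover $B(1,y)=-D(1)y$, so the normalization $D(1)=0$ is equivalent to $B(1,\cdot)\equiv 0$; this is the clause that will be matched against the induction hypothesis. The base case $n=1$ is then immediate: $D\in\mathscr{D}_{1}(\mathbb{F})$ means $B\equiv 0$, i.e. $\varphi(xy)=\varphi(x)+\varphi(y)$ for all $x,y\in\mathbb{F}^{\times}$, so $\varphi$ is additive on $\mathbb{F}^{\times}$ — equivalently a generalized polynomial of degree at most $1$ — and such a $\varphi$ automatically satisfies $\varphi(1)=0$, matching $D(1)=0$.

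For the inductive step I would invoke the standard difference characterization of generalized polynomials (a consequence of the Polarization formula, Theorem~\ref{Thm_polarization}, together with its converse in \cite{Sze91}): a function $p$ on a commutative group is a generalized polynomial of degree at most $n$ \emph{if and only if} $\Delta_{y}p$ is a generalized polynomial of degree at most $n-1$ for every $y$. Assuming the theorem for $n-1$, one argues as follows. By definition $D\in\mathscr{D}_{n}(\mathbb{F})$ if and only if $D$ is additive and $B(\cdot,y)\in\mathscr{D}_{n-1}(\mathbb{F})$ for every $y$ (using the symmetry of $B$). Applying the induction hypothesis to $B(\cdot,y)$, this holds if and only if $B(1,y)=0$ and $B(\cdot,y)/\mathrm{id}$ is a generalized polynomial of degree at most $n-1$. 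By the displayed identity the latter says that $y\,\Delta_{y}\varphi-y\varphi(y)$ is such a polynomial, which — after removing the scalar $y\neq 0$ and the additive constant — is exactly the statement that $\Delta_{y}\varphi$ is a generalized polynomial of degree at most $n-1$. Since this must hold for all $y$, the difference characterization gives precisely that $\varphi$ is a generalized polynomial of degree at most $n$, while $B(1,y)=0$ for all $y$ yields $D(1)=0$. Reading this chain of equivalences in both directions closes the induction.

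The step I expect to require the most care is the degree bookkeeping around $B(x,y)/x = y\bigl(\Delta_{y}\varphi(x)-\varphi(y)\bigr)$: naively each of $\varphi(xy)$ and $\varphi(x)$ has degree $n$, and it is essential that their difference $\Delta_{y}\varphi$ drops the degree to $n-1$, mirroring the drop in order $n\to n-1$ hard-wired into the recursive definition of $\mathscr{D}_{n}(\mathbb{F})$. Tracking the constant term $\varphi(y)$ and the scalar factor $y$ (so that neither alters the degree), and keeping straight the switch between the additive group $\mathbb{F}$ on which $D$ lives and the multiplicative group $\mathbb{F}^{\times}$ on which $\varphi$ is a generalized polynomial, is the only genuinely delicate part; everything else is dictated by the induction hypothesis.
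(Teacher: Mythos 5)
Your argument is essentially sound, but note first that there is no in-paper proof to compare against: the paper imports Theorem~\ref{thm_KisLac} verbatim from Kiss--Laczkovich \cite{KisLac18} and never proves it. Judged on its own merits, your induction reconstructs the mechanism behind the cited result correctly: the identity $B(x,y)=xy\bigl(\varphi(xy)-\varphi(x)-\varphi(y)\bigr)$ is right, it legitimately converts the recursive Leibniz-defect condition into the statement that the multiplicative increments $\Delta_y\varphi$ drop the degree by one, the symmetry of $B$ does reduce the bi-derivation condition to a single variable, and the normalization bookkeeping via $B(1,y)=-D(1)y$ is exactly what makes the clause $D(1)=0$ propagate through the equivalences (the case $y=0$, where $B(\cdot,0)\equiv 0$, imposes no condition, and the scalar $y\neq 0$ and the constant $\varphi(y)$ are indeed harmless).

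Two points need to be made explicit for the proof to be complete. First, the single-increment characterization you invoke --- $p$ is a generalized polynomial of degree at most $n$ on a commutative group if and only if $\Delta_y p$ is one of degree at most $n-1$ for every $y$ --- does \emph{not} follow from the Polarization formula as stated in the paper; the nontrivial (``only if $\Delta_y p$ has lower degree, then $p$ has degree $\le n$'') direction is the Fr\'echet-type theorem, valid here because $\mathbb{F}^{\times}$ is a commutative group and $\mathbb{C}$ is a torsion-free, uniquely divisible range (see \cite{Sze91}); since this is the load-bearing external ingredient, it must be cited as such. Second, in the base case your phrase ``additive on $\mathbb{F}^{\times}$ --- equivalently a generalized polynomial of degree at most $1$'' is not an equivalence: degree at most $1$ permits a constant term $c$, and in the backward direction you need the one-line argument $c=\varphi(1)=D(1)=0$ to kill it, exactly parallel to how $B(1,y)=0$ is used in the inductive step. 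With these two repairs your chain of equivalences closes in both directions, and it gives a self-contained alternative to relying on \cite{KisLac18}.
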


\section{Results}

Henceforth let $\mathbb{F}\subset \mathbb{C}$ be a field, $n$ be a positive integer and $P\in \mathbb{F}[x]$ be a polynomial. In what follows we will study generalized monomials $f\colon \mathbb {F}\to \mathbb{C}$ of degree $n$  under the condition that the mapping 
\[
\mathbb{F}\ni x\longmapsto f(P(x))
\]
is a (normal) polynomial. 

At first we show that instead of polynomials $P$, we always may restrict ourselves to (classical) monomials. For this we need the following statement which is in some sense an extension of Lemma \ref{lem_monomial_deg}.

\begin{lem}\label{lem3}
Let $\mathbb{F}\subset \mathbb{C}$ be a field,  $n\in \mathbb{N}$, $\alpha_{1}, \ldots, \alpha_{n}$ be non-negative integers and 
$F\colon \mathbb{F}\to \mathbb{C}$ be a symmetric and $n$-additive function. Then the function $g\colon \mathbb{F}\to \mathbb{C}$ defined by 
\[
g(x)= F(x^{\alpha_{1}}, \ldots, x^{\alpha_{n}}) 
\qquad 
\left(x\in \mathbb{F}\right)
\]
is a generalized monomial of degree $(\alpha_{1}+\cdots+ \alpha_{n})$. 
\end{lem}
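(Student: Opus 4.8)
The plan is to realize $g$ as the diagonalization (trace) of an explicitly constructed $N$-additive function, where $N = \alpha_{1}+\cdots+\alpha_{n}$, and then to symmetrize. First I would introduce $N$ independent variables grouped into $n$ blocks, the $i$-th block consisting of $\alpha_{i}$ variables $x_{i,1}, \ldots, x_{i,\alpha_{i}}$, and define $H\colon \mathbb{F}^{N}\to \mathbb{C}$ by
\[
H\bigl(x_{1,1}, \ldots, x_{n,\alpha_{n}}\bigr) = F\!\left(\prod_{j=1}^{\alpha_{1}} x_{1,j}, \ldots, \prod_{j=1}^{\alpha_{n}} x_{n,j}\right),
\]
with the convention that an empty product (occurring precisely when some $\alpha_{i}=0$) equals $1$. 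By construction, the diagonal substitution $x_{i,j}=x$ for all $i,j$ yields $H(x, \ldots, x)= F(x^{\alpha_{1}}, \ldots, x^{\alpha_{n}}) = g(x)$, so that $g = H^{\ast}$.

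The next step is to verify that $H$ is $N$-additive. Fix all variables but one, say $x_{i,j}$. The $i$-th entry $\prod_{\ell=1}^{\alpha_{i}} x_{i,\ell} = x_{i,j}\prod_{\ell \neq j} x_{i,\ell}$ of $F$ is an additive function of $x_{i,j}$ by distributivity of multiplication over addition in $\mathbb{F}$, while the remaining $n-1$ entries do not depend on $x_{i,j}$; since $F$ is additive in each of its arguments, $H$ is additive in $x_{i,j}$. As this holds for every variable, $H$ is $N$-additive. (When $N=0$, i.e. all $\alpha_{i}=0$, the function $g$ is the constant $F(1, \ldots, 1)$, which is a generalized monomial of degree $0$ by convention, and there is nothing further to prove.)

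Finally, $H$ need not be symmetric, so I would pass to its symmetrization
\[
\widetilde{H}(x_{1}, \ldots, x_{N}) = \frac{1}{N!} \sum_{\sigma \in S_{N}} H\bigl(x_{\sigma(1)}, \ldots, x_{\sigma(N)}\bigr).
\]
Since the range $\mathbb{C}$ is a vector space over $\mathbb{Q}$, division by $N!$ is legitimate, and $\widetilde{H}$ is a symmetric $N$-additive function. The key point is that symmetrization leaves the diagonalization unchanged: $\widetilde{H}(x, \ldots, x) = \tfrac{1}{N!}\sum_{\sigma} H(x, \ldots, x) = H^{\ast}(x) = g(x)$. Hence $g = \widetilde{H}^{\ast}$ is the diagonalization of a symmetric $N$-additive function, i.e. a generalized monomial of degree $N = \alpha_{1}+\cdots+\alpha_{n}$, as claimed. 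This also recovers Lemma \ref{lem_monomial_deg} as the special case $\alpha_{1}=\cdots=\alpha_{n}=k$ with $F$ itself symmetric.

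I expect the only genuine subtlety to be bookkeeping rather than conceptual difficulty: correctly indexing the $N$ variables across the $n$ blocks, and handling the degenerate exponents $\alpha_{i}=0$ (via the empty-product convention) together with the fully degenerate case $N=0$. The two substantive observations — that forming products within disjoint blocks of variables preserves multiadditivity, and that symmetrizing an $N$-additive map does not alter its trace — are precisely the mechanisms underlying the symmetrization method, and they drive the entire argument.
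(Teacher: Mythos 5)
Your proposal is correct and is essentially the paper's own argument: the authors define the symmetrized function $\mathscr{F}(x_{1},\ldots,x_{N})=\frac{1}{N!}\sum_{\sigma}F\bigl(x_{\sigma(1)}\cdots x_{\sigma(\alpha_{1})},\ldots\bigr)$ in a single step, which is exactly your $\widetilde{H}$, relying on the same two mechanisms (block products preserve multiadditivity, symmetrization preserves the trace). Your version merely separates the construction into two stages and handles the degenerate cases $\alpha_{i}=0$ and $N=0$ explicitly, which the paper leaves implicit.
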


\begin{proof}
Let $\mathbb{F}\subset \mathbb{C}$ be a field,  $n\in \mathbb{N}$, $\alpha_{1}, \ldots, \alpha_{n}$ be non-negative integers and 
$F\colon \mathbb{F}\to \mathbb{C}$ be a symmetric and $n$-additive function and consider the function $g\colon \mathbb{F}\to \mathbb{C}$ defined by 
\[
g(x)= F(x^{\alpha_{1}}, \ldots, x^{\alpha_{n}}) 
\qquad 
\left(x\in \mathbb{F}\right). 
\]
Let $N=\alpha_{1}+\cdots+ \alpha_{n}$ and define the mapping $\mathscr{F}\colon \mathbb{F}^{N}\to \mathbb{C}$ through 
\[
\mathscr{F}(x_{1}, \ldots, x_{N})= \dfrac{1}{N!}\sum_{\sigma \in \mathscr{S}_{n}}
F(x_{\sigma(1)}\cdots x_{\sigma(\alpha_{1})}, \ldots, x_{\sigma(N-\alpha_{n}+1}\cdots x_{\sigma(N)})
\qquad 
\left(x_{1}, \ldots, x_{N}\in \mathbb{F}\right). 
\]
Since $F$ is a symmetric and $n$-additive function, $\mathscr{F}$ is also symmetric and $N$-additive, further we have 
\[
\mathscr{F}(x_{1}, \ldots, x_{N})= 
F(x^{\alpha_{1}}, \ldots, x^{\alpha_{n}})= g(x)
\qquad 
\left(x\in \mathbb{F}\right). 
\]
Thus $g$ can be represented as the trace of a symmetric and $(\alpha_{1}+\cdots+ \alpha_{n})$-additive mapping, showing that the function $g$ is indeed a generalized monomial of degree $(\alpha_{1}+\cdots+ \alpha_{n})$. 
\end{proof}

\begin{lem}\label{lem4}
Let $k, n\in \mathbb{N}$, $k\geq 2$,  $\mathbb{F}\subset \mathbb{C}$ be a field, $P\in \mathbb{F}[x]$ be a (classical) polynomial of degree $k$ with leading coefficient $1$ and $f\colon \mathbb{F}\to \mathbb{C}$ be a generalized monomial of degree $n$. If the mapping 
\[
\mathbb{F}\ni x \longmapsto f(P(x))
\]
is a normal polynomial, then the mapping 
\[
\mathbb{F} \ni x \longmapsto f(x^{k})
\]
is a normal polynomial as well. 
\end{lem}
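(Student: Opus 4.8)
The plan is to expand $f(P(x))$ by multiadditivity, to collect the resulting terms by their total degree, and to observe that the top homogeneous component is exactly $f(x^{k})$; the hypothesis that $f\circ P$ is a normal polynomial then forces this component to be a normal polynomial as well.

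Since $f$ is a generalized monomial of degree $n$, I would write $f=A^{\ast}$ with $A\colon \mathbb{F}^{n}\to \mathbb{C}$ symmetric and $n$-additive, and write $P(x)=\sum_{j=0}^{k}c_{j}x^{j}$ with $c_{j}\in \mathbb{F}$ and $c_{k}=1$. Using that $A$ is additive in each of its variables,
\[
f(P(x))=A\bigl(P(x), \ldots, P(x)\bigr)=\sum_{(j_{1}, \ldots, j_{n})\in \{0, \ldots, k\}^{n}} A\bigl(c_{j_{1}}x^{j_{1}}, \ldots, c_{j_{n}}x^{j_{n}}\bigr).
\]
For each fixed multi-index $(j_{1}, \ldots, j_{n})$ the map $(y_{1}, \ldots, y_{n})\mapsto A(c_{j_{1}}y_{1}, \ldots, c_{j_{n}}y_{n})$ is again $n$-additive, so by Lemma \ref{lem3} (whose proof only symmetrizes the given multiadditive map and does not actually use its symmetry) the corresponding summand is a generalized monomial of degree $j_{1}+\cdots+j_{n}$. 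Collecting the summands according to their total degree $m=j_{1}+\cdots+j_{n}$ yields a decomposition
\[
f(P(x))=\sum_{m=0}^{nk}p_{m}(x),
\]
where each $p_{m}$ is a generalized monomial of degree $m$. The maximal degree $m=nk$ is attained only by the single multi-index $(k, \ldots, k)$, and since $c_{k}=1$ the associated term is $A(x^{k}, \ldots, x^{k})=f(x^{k})$; hence $p_{nk}=f(x^{k})$.

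Next I would invoke the hypothesis that $f\circ P$ is a normal polynomial, writing it in its canonical form $f(P(x))=\sum_{\left|\alpha\right|\leq N}c_{\alpha}a(x)^{\alpha}$ with $a=(a_{1}, \ldots, a_{r})$ additive. Its homogeneous term of degree $m$, namely $h_{m}(x)=\sum_{\left|\alpha\right|=m}c_{\alpha}a(x)^{\alpha}$, is itself a normal polynomial and, being a sum of diagonals of products of additive functions, is also a generalized monomial of degree $m$. Thus $\sum_{m}h_{m}$ and $\sum_{m}p_{m}$ are two representations of one and the same generalized polynomial as a sum of generalized monomials of pairwise distinct degrees.

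The key step, which I expect to be the main point requiring care, is the uniqueness of such a representation: if $\sum_{m}B_{m}^{\ast}\equiv 0$ with each $B_{m}$ symmetric and $m$-additive and $M$ the largest occurring degree, then applying the difference operator $\Delta_{y}^{M}$ and using the Polarization formula (Theorem \ref{Thm_polarization}) gives $M!\,B_{M}^{\ast}\equiv 0$, whence $B_{M}\equiv 0$ by Lemma \ref{mainfact} (multiplication by $M!$ being injective on $\mathbb{C}$), and one proceeds downward by induction. Consequently $p_{m}=h_{m}$ for every $m$; in particular $f(x^{k})=p_{nk}=h_{nk}$, which is a normal polynomial. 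This is exactly the assertion. The only genuine obstacle is to match the two notions of homogeneous component—the one coming from the multiadditive expansion and the one coming from the normal polynomial form—and this is precisely what the uniqueness argument secures.
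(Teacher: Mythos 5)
Your proposal is correct and follows essentially the same route as the paper: expand $f(P(x))$ by multiadditivity, identify each summand as a generalized monomial of degree $j_{1}+\cdots+j_{n}$ via Lemma~\ref{lem3} (with the top term $f(x^{k})$ isolated thanks to the leading coefficient being $1$), and match it against the homogeneous components of the normal polynomial representation. The only difference is that you spell out, via the Polarization formula and Lemma~\ref{mainfact}, the uniqueness of the decomposition into generalized monomials of distinct degrees, a step the paper merely asserts (``the monomial terms are the same on each side''), so your write-up is, if anything, more complete.
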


\begin{proof}
Let $k, n\in \mathbb{N}$, $\mathbb{F}\subset \mathbb{C}$ be a field, $P\in \mathbb{F}[x]$ be a (classical) polynomial of degree $k$ and $f\colon \mathbb{F}\to \mathbb{C}$ be a generalized monomial of degree $n$. Suppose further that the mapping 
\[
\mathbb{F}\ni x \longmapsto f(x^{k})
\]
is a normal polynomial. 

Since $P\in \mathbb{F}[x]$ is a (classical) polynomial of degree $k$, we have 
\[
P(x)= \sum_{l=0}^{k}\alpha_{l}x^{l} 
\qquad 
\left(x\in \mathbb{F}\right), 
\]
with some constants $\alpha_{l}\in \mathbb{F}$, $l=0, 1, \ldots, k$ such that $\alpha_{k}=1$. Further, as $f\colon \mathbb{F}\to \mathbb{C}$ is a monomial of degree $n$, there exists a uniquely determined symmetric and $n$-additive function $F\colon \mathbb{F}^{n}\to \mathbb{C}$ such that 
\[
f(x)= F(x, \ldots, x) 
\qquad 
\left(x\in \mathbb{F}\right). 
\]
This together yield that there exists a positive integer $m$, linearly independent additive functions $a_{1}, \ldots, a_{m}$ and a complex polynomial $Q\in \mathbb{C}[x]$ such that 
\[
F\left(\sum_{l=0}^{k}\alpha_{l}x^{l}, \ldots, \sum_{l=0}^{k}\alpha_{l}x^{l}\right)=
P(a_{1}(x), \ldots, a_{m}(x))= \sum_{\substack{\beta\in \mathbb{N}^{m}\\ |\beta|\leq kn}}c_{\beta}a(x)^{\beta}
\qquad 
\left(x\in \mathbb{F}\right). 
\]
Using that $F$ is symmetric and additive in each of its variables, we get that 
\[
F(x^{k}, \ldots, x^{k})+ 
\lambda_{k-1, k, \ldots, k} F(\alpha_{k-1}x^{k-1}, \alpha_{k}x^{k}, \ldots, \alpha_{k}x^{k})
+ \cdots+
\lambda_{0, \ldots, 0}F(\alpha_{0}, \ldots, \alpha_{0})= \sum_{\substack{\beta\in \mathbb{N}^{m}\\ |\beta|\leq kn}}c_{\beta}a(x)^{\beta}
\]
for all $x\in \mathbb{F}$ with some complex constants $\lambda_{\gamma}$, $\gamma \in \mathbb{N}^{n}$, $|\gamma|\leq n$. Due to Lemma \ref{lem3}, the left and also the right hand side of this identity is a generalized polynomial (being linear combinations of generalized monomials) and these agree for all possible values. This can however happen only if the monomial terms are the same on each side. From this we get especially that 
\[
F(x^{k}, \ldots, x^{k}) = \sum_{\substack{\beta\in \mathbb{N}^{m}\\ |\beta|=kn}}c_{\beta}a(x)^{\beta}, 
\]
showing that the mapping 
\[
\mathbb{F} \ni x \longmapsto f(x^{k})
\]
is a normal polynomial as well. 
\end{proof}

\begin{rem}
According to the previous lemma once $f\colon \mathbb{F}\to \mathbb{C}$ is a generalized monomial of degree $n$ such that  the mapping 
\[
\mathbb{F}\ni x \longmapsto f(P(x))
\]
is a normal polynomial, then the mapping 
\[
\mathbb{F} \ni x \longmapsto f(x^{k})
\]
is a normal polynomial as well. This enables us to restrict ourselves to considering generalized monomials for which the mapping 
\[
\mathbb{F} \ni x \longmapsto f(x^{k})
\]
is a normal polynomial for a fixed $k\geq 2$. At the same time, we have to emphasize that the assumption that the mapping 
\[
\mathbb{F}\ni x \longmapsto f(P(x))
\]
is a normal polynomial for a fixed polynomial $P\in \mathbb{F}[x]$ is more restrictive than the previous one. To illustrate this let us consider the following example. Let $f\colon \mathbb{F}\to \mathbb{C}$ be a quadratic function and let 
\[
P(x)= x^{2}+\alpha_{1}x+\alpha_{0} 
\qquad 
\left(x\in \mathbb{F}\right), 
\]
where $\alpha_{1}, \alpha_{0}\in \mathbb{F}$ are fixed. The assumption that the mapping 
\[
\mathbb{F}\ni x \longmapsto f(P(x))
\]
is a normal polynomial means that we have 
\[
F(x^{2}+\alpha_{1}x+\alpha_{0}, x^{2}+\alpha_{1}x+\alpha_{0})= Q(a_{1}(x), \ldots, a_{m}(x)) 
\qquad 
\left(x\in \mathbb{F}\right)
\]
with appropriate linearly independent additive functions $a_{1}, \ldots, a_{m}\colon \mathbb{F}\to \mathbb{C}$ and a complex polynomial $Q\in \mathbb{C}[x_{1}, \ldots, x_{m}]$, where $F\colon \mathbb{F}^{2}\to \mathbb{C}$ denotes the uniquely determined bi-additive mapping whose trace is the quadratic function $f$. Using the symmetry and also the bi-additivity of $F$, we get that 
\[
F(x^{2}, x^{2})+ 2F(x^{2}, \alpha_{1}x)+ 2F(x^2, \alpha_{0})+ F(\alpha_{1}x, \alpha_{1}x)+2F(\alpha_{1}x, \alpha_{0})+ F(\alpha_{0}, \alpha_{0})= 
Q(a_{1}(x), \ldots, a_{m}(x)) 
\qquad 
\left(x\in \mathbb{F}\right). 
\]
Due to Lemma \ref{lem3}, the mappings 
\[
\mathbb{F}\ni x \longmapsto F(x^{i}, x^{j})
\]
are generalized monomials of degree $(i+j)$. Further, generalized monomials of different degree are linearly independent. Therefore, not only the mapping 
\[
\mathbb{F}\ni x \longmapsto F(x^{2}, x^{2})
\]
is a normal polynomial, but also the mappings 
\[
\begin{array}{rcl}
\mathbb{F}\ni &x& \longmapsto F(x^{2}, \alpha_{1}x)\\
\mathbb{F}\ni &x& \longmapsto 2F(x^{2}, \alpha_{0})+F(\alpha_{1}x, \alpha_{1}x)\\
\mathbb{F}\ni &x& \longmapsto F(\alpha_{1}x, \alpha_{0})\\
\mathbb{F}\ni &x& \longmapsto F(\alpha_{0}, \alpha_{0}), 
\end{array}
\]
too. Obviously, this is always true for the last two functions. Nevertheless, the fact that the first three functions have this property carries more information than that only the first of them is a normal polynomial. 
\end{rem}

Our second lemma says that while considering this problem, we may restrict ourselves to homogeneous (normal) polynomials. 

\begin{lem}\label{lemma_hom}
Let $k, n\in \mathbb{N}$, $k\geq 2$,  $\mathbb{F}\subset \mathbb{C}$ be a field and $f\colon \mathbb{F}\to \mathbb{C}$ be a generalized monomial of degree $n$. If the mapping 
\[
\mathbb{F}\ni x \longmapsto f(x^{k})
\]
is a normal polynomial, then there exists a \emph{homogeneous} complex polynomial $\tilde{P}$ and there are linearly independent, complex valued additive functions $a_{1}, \ldots, a_{m}$ on $\mathbb{F}$ such that 
\[
f(x^{k})= \tilde{P}(a_{1}(x), \ldots, a_{m}(x)) 
\qquad 
\left(x\in \mathbb{F}\right), 
\]
in other words, we have 
\[
f(x^{k})= \sum_{\substack{\alpha\in \mathbb{N}^{m}\\ |\alpha|=kn}}\lambda_{\alpha}a^{\alpha}(x)
=
\sum_{\substack{\alpha_{1}, \ldots, \alpha_{m}\geq 0\\ 
\alpha_{1}+\cdots+ \alpha_{m}=kn}}\lambda_{\alpha_{1}, \ldots, \alpha_{m}}a_{1}^{\alpha_{1}}(x)\cdots a_{m}^{\alpha_{m}}(x)
\]
for each $x\in \mathbb{F}$. 
\end{lem}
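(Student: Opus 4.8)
The plan is to combine Lemma~\ref{lem_monomial_deg}, which fixes the exact degree of $x\mapsto f(x^{k})$ as a generalized monomial, with the uniqueness of the homogeneous decomposition of a generalized polynomial. First I would invoke Lemma~\ref{lem_monomial_deg}: since $f$ is a generalized monomial of degree $n$, the mapping $g\colon x\mapsto f(x^{k})$ is a generalized monomial of degree $nk$. By hypothesis $g$ is also a normal polynomial, so there are a positive integer $m$, linearly independent additive functions $a_{1}, \ldots, a_{m}\colon \mathbb{F}\to \mathbb{C}$, a nonnegative integer $N$ and constants $c_{\beta}\in \mathbb{C}$ with
\[
g(x)= f(x^{k})= \sum_{\substack{\beta\in \mathbb{N}^{m}\\ |\beta|\leq N}}c_{\beta}a(x)^{\beta}
\qquad
\left(x\in \mathbb{F}\right).
\]

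Next I would split this representation into its homogeneous parts. For each $j$ set $p_{j}(x)= \sum_{|\beta|=j}c_{\beta}a(x)^{\beta}$, so that $g= \sum_{j=0}^{N}p_{j}$. Each monomial $a_{1}(x)^{\beta_{1}}\cdots a_{m}(x)^{\beta_{m}}$ with $|\beta|=j$ is the diagonalization of the symmetric $j$-additive map obtained by symmetrizing the product of the corresponding $j$ additive factors (listing each $a_{i}$ with multiplicity $\beta_{i}$); hence $p_{j}$ is a generalized monomial of degree $j$. This is exactly the type of argument already carried out in Lemma~\ref{lem3}. Consequently the identity $g=\sum_{j}p_{j}$ exhibits $g$ as a sum of generalized monomials of pairwise distinct degrees.

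I would then use that generalized monomials of distinct degrees are linearly independent. Since $g$ is itself a generalized monomial of degree $nk$, the uniqueness of the homogeneous decomposition forces $p_{j}\equiv 0$ for every $j\neq nk$ and $g= p_{nk}$. Therefore
\[
f(x^{k})= \sum_{\substack{\beta\in \mathbb{N}^{m}\\ |\beta|=nk}}c_{\beta}a(x)^{\beta}= \tilde{P}(a_{1}(x), \ldots, a_{m}(x))
\qquad
\left(x\in \mathbb{F}\right),
\]
where $\tilde{P}$ is the homogeneous complex polynomial of degree $nk$ with coefficients $c_{\beta}$; the functions $a_{1}, \ldots, a_{m}$ remain linearly independent, being unchanged. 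This is precisely the asserted representation.

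The one step that deserves care is the linear independence of generalized monomials of distinct degrees (equivalently, the uniqueness of the decomposition of a generalized polynomial into its homogeneous monomial components); everything else is bookkeeping. I would establish it via the difference operator: if $\sum_{d}q_{d}\equiv 0$ with each $q_{d}$ a generalized monomial of degree $d$ and top degree $D$, then applying $\Delta^{D}_{y}$ and using the Polarization formula (Theorem~\ref{Thm_polarization}) isolates the leading symmetric function, which must vanish by Lemma~\ref{mainfact}; here the hypothesis $\mathrm{char}(\mathbb{F})=0$ guarantees the injectivity of multiplication by $D!$. Iterating downward annihilates every term, which yields the required independence.
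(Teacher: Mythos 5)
Your proposal is correct, but the mechanism you use to isolate the degree-$kn$ part is genuinely different from the paper's. Both arguments begin the same way: Lemma~\ref{lem_monomial_deg} makes $g(x)=f(x^{k})$ a generalized monomial of degree $kn$, and the hypothesis supplies a normal polynomial representation $g=\sum_{j\leq N}p_{j}$ with homogeneous parts $p_{j}$. From there the paper exploits $\mathbb{Q}$-homogeneity directly: substituting $rx$ with $r\in\mathbb{Q}$ and using that each $a_{i}$ is $\mathbb{Q}$-homogeneous of degree one, it obtains, for each fixed $x$, a polynomial identity in $r$ that vanishes for all rationals, and comparing coefficients of $r^{l}$ kills every homogeneous term with $l\neq kn$ --- a pointwise dilation argument needing nothing beyond additivity of the $a_{i}$. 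You instead symmetrize each monomial $a^{\beta}$ (exactly as in Lemma~\ref{lem3}) to recognize $p_{j}$ as a generalized monomial of degree $j$, and then appeal to the linear independence of generalized monomials of distinct degrees, which you prove with the difference operator: $\Delta^{D}_{y}$ annihilates all terms of degree below $D$ and, by the Polarization formula (Theorem~\ref{Thm_polarization}) and its Corollary, isolates $D!\,q_{D}(y)$, after which you iterate downward. In effect the two proofs establish the same underlying independence statement --- the paper via rational dilations, you via finite differences; the paper's route is shorter and avoids the polarization machinery entirely, while yours is more structural and proves en route the general uniqueness of the homogeneous decomposition of a generalized polynomial, reusing tools the paper deploys elsewhere. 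Two minor imprecisions, neither a gap: after applying $\Delta^{D}_{y}$ you get $D!\,q_{D}(y)=0$ outright, so dividing by $D!$ in $\mathbb{C}$ suffices and Lemma~\ref{mainfact} (passing from vanishing trace to vanishing multiadditive function) is not actually needed; and the characteristic-zero hypothesis you cite is automatic here since $\mathbb{F}\subset\mathbb{C}$ and the division takes place in $\mathbb{C}$ anyway.
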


\begin{proof}
Let $k, n\in \mathbb{N}$, $k\geq 2$,  $\mathbb{F}$ be a field and $f\colon \mathbb{F}\to \mathbb{C}$ be a generalized monomial of degree $n$. Assume further that  the mapping 
\[
\mathbb{F}\ni x \longmapsto f(x^{k})
\]
is a normal polynomial. Then due to Lemma \ref{lem_monomial_deg} this mapping is a generalized monomial of degree $kn$ and hence it is $\mathbb{Q}$-homogeneous of degree $kn$. 

If additionally, this mapping is a normal polynomial, there exists a complex polynomial $P$ and there are linearly independent, complex valued additive functions $a_{1}, \ldots, a_{m}$ on $\mathbb{F}$ such that 
\[
f(x^{k})= P(a_{1}(x), \ldots, a_{m}(x))= P(a(x))= \sum_{l=0}^{N}\sum_{\substack{\alpha\in \mathbb{N}^{k}\\ |\alpha|=l}} \lambda_{\alpha} a^{\alpha}(x)
\qquad 
\left(x\in \mathbb{F}\right). 
\]
Let $r\in \mathbb{Q}$ be arbitrary and let us substitute $rx$ in place of $x$ in the above identity. Using the $\mathbb{Q}$-homogeneity of the involved functions, we deduce 
\[
r^{kn}f(x^{k})-
 \sum_{l=0}^{N} r^{l}\sum_{\substack{\alpha\in \mathbb{N}^{k}\\ |\alpha|=l}} \lambda_{\alpha}  a^{\alpha}(x)=0
\qquad 
\left(r\in \mathbb{Q}, x\in \mathbb{F}\right). 
\]
Observe that the left hand side of this identity is a polynomial in $r$ that is identically zero. So all of its coefficients should vanish, yielding that 
\[
f(x^{k})= \sum_{\substack{\alpha\in \mathbb{N}^{m}\\ |\alpha|=kn}}\lambda_{\alpha}a^{\alpha}(x)
\]
holds for all $x\in \mathbb{F}$. 
\end{proof}

At first glance the assumption of the lemma above, i.e., the mapping 
\[
\mathbb{F} \ni x\longmapsto f(x^{k})
\]
is a normal polynomial, seems a bit artificial. Nevertheless, the following examples show that this is not the case. 

\begin{ex}
Let $k$ be a positive integer, $\varphi_{1}, \ldots, \varphi_{k}\colon \mathbb{F}\to \mathbb{C}$ be linearly independent homomorphisms and $\lambda_{i, j}\in \mathbb{C}$ for all $i, j=1, \ldots, k$. 
Then the mapping $f\colon \mathbb{F}\to \mathbb{C}$ defined by 
\[
f(x)= \sum_{i, j=1}^{k}\lambda_{i, j}\varphi_{i}(x)\varphi_{j}(x) 
\qquad 
\left(x\in \mathbb{F}\right)
\]
is a quadratic function. Further if $n\in \mathbb{N}$, then we also have 
\[
f(x^{n})= \sum_{i, j=1}^{k}\lambda_{i, j}\varphi_{i}(x)^{n}\varphi_{j}(x)^{n}
\qquad 
\left(x\in \mathbb{F}\right). 
\]
In other words, we have 
\[
f(x^{n})= P(\varphi_{1}(x), \ldots, \varphi_{k}(x))
\qquad 
\left(x\in \mathbb{F}\right), 
\]
where the $k$-variable complex, homogeneous polynomial $P$ is defined by 
\[
P(x_{1}, \ldots, x_{k})= \sum_{i, j=1}^{k}\lambda_{i, j}x_{i}^{n}x_{j}^{n}
\qquad 
\left(x_{1}, \ldots, x_{k}\in \mathbb{C}\right). 
\]
\end{ex}

\begin{ex}
Suppose now that $\mathbb{F}$ is a subfield of $\mathbb{C}$. 
Let $k$ be a positive integer, $d_{1}, \ldots, d_{k}\colon \mathbb{F}\to \mathbb{C}$ be linearly independent derivations and $\lambda_{i, j}\in \mathbb{C}$ for all $i, j=1, \ldots, k$. 
Then the mapping $f\colon \mathbb{F}\to \mathbb{C}$ defined by 
\[
f(x)= \sum_{i, j=1}^{n}\lambda_{i, j}d_{i}(x)d_{j}(x) 
\qquad 
\left(x\in \mathbb{F}\right)
\]
is a quadratic function. Further if $n\in \mathbb{N}$, then we also have 
\[
f(x^{n})= \sum_{i, j=1}^{n}\lambda_{i, j}n^{2}x^{2n-2}d_{i}(x)d_{j}(x) 
\qquad 
\left(x\in \mathbb{F}\right). 
\]
In other words, 
\[
f(x^{n})= P(x, d_{1}(x), \ldots, d_{k}(x)) 
\qquad 
\left(x\in \mathbb{F}\right), 
\]
where the $(k+1)$-variable complex polynomial $P$ is defined by 
\[
P(x_{1}, \ldots, x_{k}, z)= \sum_{i, j=1}^{n}\lambda_{i, j}n^{2}z^{2n-2}x_{i}x_{j} 
\qquad 
\left(x_{1}, \ldots, x_{k}, z\in \mathbb{C}\right). 
\]
\end{ex}

\begin{ex}
Assume in this example that $\mathbb{F}$ is a subfield of $\mathbb{C}$. 
Let $k$ be a positive integer and $d\colon \mathbb{F}\to \mathbb{C}$ be a derivation and define the quadratic function $f\colon \mathbb{F}\to \mathbb{C}$ by 
\[
f(x)= d^{k}(x^{2})= \underbrace{d\circ \cdots \circ d}_\text{$k$ times}(x^{2})  
\qquad 
\left(x\in \mathbb{C}\right). 
\]
Further, if $n$ is a positive integer, then we have 
\[
f(x^{n})= d^{k}(x^{2n})= 
\sum_{\substack{l_{1}, \ldots, l_{2n}\geq 0\\ l_{1}+\cdots+l_{2n}=k}}
\binom{k}{l_{1}, \ldots, l_{2n}}d^{l_{1}}(x)\cdots d^{l_{2n}}(x)
\]
for all $x\in \mathbb{F}$. In other words, we have 
\[
f(x^{n})= P(x, d(x), d\circ d(x), \ldots, d^{k}(x)) 
\qquad 
\left(x\in \mathbb{F}\right). 
\]
\end{ex}

\begin{rem}
All of the above examples can easily be generalized from quadratic functions to monomial functions.  To get similar examples instead of quadratic functions, for monomials of degree $n$, where $n$ is a fixed it is enough to consider the mappings 
\[
f(x)= \sum_{i=1}^{k}\sum_{\substack{\alpha\in \mathbb{N}^{n}\\ |\alpha|=n}}\lambda_{i}\Phi_{i}(x)^{\alpha}
\qquad 
\left(x\in \mathbb{F}\right), 
\]
\[
f(x)= \sum_{i=1}^{k}\sum_{\substack{\alpha\in \mathbb{N}^{n}\\ |\alpha|=n}}\lambda_{i}D _{i}(x)^{\alpha}
\qquad 
\left(x\in \mathbb{F}\right)
\]
and 
\[
f(x)= d^{k}(x^{n})= \underbrace{d\circ \cdots \circ d}_\text{$k$ times}(x^{n})
\qquad 
\left(x\in \mathbb{F}\right), 
\]
here 
\[
\Phi_{i}(x)= \left(\varphi_{1, i}(x), \ldots, \varphi_{n, i}(x)\right)
\quad 
\text{and}
\quad 
D_{i}(x)=\left(d_{1, i}(x), \ldots, d_{n, i}(x)\right)
\qquad 
\left(x\in \mathbb{F}\right), 
\]
where the functions $\varphi_{l, i}$ are homomorphisms, while $d$ and $d_{l, i}$ are derivations for all possible indices $l$ and $i$. 
\end{rem}

Regarding `polynomial equations' for generalized monomials, we note that in the literature, there are several results for additive functions $a\colon \mathbb{F}\to \mathbb{C}$ that also satisfy a polynomial equation. Based on the results presented above, the following statement can be deduced. 

\begin{prop}
Let $\mathbb{F}\subset \mathbb{C}$ be a field, $k\in \mathbb{N}$, $k\geq 2$ and $P\in \mathbb{Q}[x]$ be a (classical) polynomial of degree $k$. 
\begin{enumerate}[(i)]
    \item If the additive function $a\colon \mathbb{F}\to \mathbb{C}$ fulfills 
    \[
    a(P(x))= P(a(x)) 
    \qquad 
    \left(x\in \mathbb{F}\right)
    \]
    then there exists a homomorphism $\varphi\colon \mathbb{F}\to \mathbb{C}$ such that $a(x)= a(1)\varphi(x)$ for all $x\in \mathbb{F}$. Further, we also have $a(1)\in \left\{ 0, 1\right\}$. 
    \item If the additive function $a\colon \mathbb{F}\to \mathbb{C}$ fulfills 
    \[
    a(P(x))= P'(x)a(x) 
    \qquad 
    \left(x\in \mathbb{F}\right), 
    \]
    then $a$ is a derivation. 
\end{enumerate}
\end{prop}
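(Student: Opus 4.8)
The plan is to handle both parts by the same two-step scheme: use additivity to convert each functional equation into an identity between generalized polynomials, then compare homogeneous components and polarize. Throughout I write $P(x)=\sum_{l=0}^{k}c_{l}x^{l}$ with $c_{l}\in\mathbb{Q}$ and $c_{k}\neq 0$, and I use that an additive $a$ is $\mathbb{Q}$-linear, so that $a(P(x))=\sum_{l=0}^{k}c_{l}a(x^{l})$.

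For (i) the right-hand side is $P(a(x))=\sum_{l=0}^{k}c_{l}a(x)^{l}$. By Lemma \ref{lem_monomial_deg} each map $x\mapsto a(x^{l})$ is a generalized monomial of degree $l$, while $x\mapsto a(x)^{l}$ is likewise a monomial of degree $l$; hence both sides are generalized polynomials whose degree-$l$ homogeneous parts are $c_{l}a(x^{l})$ and $c_{l}a(x)^{l}$. Since generalized monomials of distinct degrees are linearly independent, I would equate these to get $c_{l}a(x^{l})=c_{l}a(x)^{l}$ for every $l$, and in particular, from $c_{k}\neq 0$, the key relation $a(x^{k})=a(x)^{k}$. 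Now I would polarize: the symmetric $k$-additive maps $(x_{1},\dots,x_{k})\mapsto a(x_{1}\cdots x_{k})$ and $(x_{1},\dots,x_{k})\mapsto a(x_{1})\cdots a(x_{k})$ have the same diagonal, so by the uniqueness contained in Lemma \ref{mainfact} they coincide, giving $a(x_{1}\cdots x_{k})=a(x_{1})\cdots a(x_{k})$. Setting $x_{3}=\dots=x_{k}=1$ yields $a(x_{1}x_{2})=a(1)^{k-2}a(x_{1})a(x_{2})$. If $a(1)=0$ a further specialization forces $a\equiv 0$; if $a(1)\neq 0$, then $a(1)^{k-1}=1$ and $\varphi:=a/a(1)$ is additive and multiplicative with $\varphi(1)=1$, which delivers the representation $a=a(1)\varphi$.

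For (ii) I would argue identically but with right-hand side $P'(x)a(x)=\sum_{l=1}^{k}lc_{l}x^{l-1}a(x)$, noting that $x\mapsto x^{l-1}a(x)$ is again a generalized monomial of degree $l$ (its symmetrization is the $l$-additive map carrying $a$ in one slot and the identity in the others). Comparing degree-$l$ parts gives $c_{l}a(x^{l})=lc_{l}x^{l-1}a(x)$, whence $a(x^{k})=kx^{k-1}a(x)$. Polarizing this against the symmetric $k$-additive map $(x_{1},\dots,x_{k})\mapsto\sum_{i=1}^{k}x_{1}\cdots\widehat{x_{i}}\cdots x_{k}\,a(x_{i})$, whose diagonal is $kx^{k-1}a(x)$, yields $a(x_{1}\cdots x_{k})=\sum_{i=1}^{k}x_{1}\cdots\widehat{x_{i}}\cdots x_{k}\,a(x_{i})$; specializing $x_{3}=\dots=x_{k}=1$ then gives $a(x_{1}x_{2})=x_{2}a(x_{1})+x_{1}a(x_{2})+(k-2)x_{1}x_{2}a(1)$. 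Putting $x_{1}=x_{2}=1$ here forces $(k-1)a(1)=0$, so $a(1)=0$ since $k\geq 2$, and the Leibniz rule $a(x_{1}x_{2})=x_{1}a(x_{2})+a(x_{1})x_{2}$ drops out, i.e.\ $a$ is a derivation.

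The degree bookkeeping and the specializations are routine; the one genuinely delicate point is the final assertion $a(1)\in\{0,1\}$ in (i). The polarization argument by itself only yields $a(1)=0$ or $a(1)^{k-1}=1$, so to rule out nontrivial $(k-1)$-st roots of unity I expect to have to use the lower-order terms of $P$: comparing the degree-$0$ (constant) components of $a(P(x))$ and $P(a(x))$ gives $c_{0}a(1)=c_{0}$, which pins $a(1)=1$ as soon as $c_{0}\neq 0$. I anticipate this is exactly where the shape of $P$ must be invoked, and it is the step I would examine most carefully, since for a pure power such as $P(x)=x^{k}$ the relation $a(x^{k})=a(x)^{k}$ alone leaves room for $a(1)$ to be any $(k-1)$-st root of unity.
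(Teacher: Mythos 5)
Your argument is correct, and at the structural level it parallels the paper's proof, but it is considerably more self-contained at both ends. The paper first invokes Lemma \ref{lem4} to reduce from $P$ to the pure power $x^{k}$ --- the same homogeneous-component comparison you carry out by hand via Lemma \ref{lem_monomial_deg} and the linear independence of monomials of distinct degrees --- and then, precisely where you polarize $a(x^{k})=a(x)^{k}$ into $a(x_{1}\cdots x_{k})=a(x_{1})\cdots a(x_{k})$ and extract multiplicativity by setting variables equal to $1$, the paper instead cites Herstein's results to obtain $a=a(1)\varphi$. Likewise in (ii): where you polarize $a(x^{k})=kx^{k-1}a(x)$ against the symmetric $k$-additive map $\sum_{i}x_{1}\cdots\widehat{x_{i}}\cdots x_{k}\,a(x_{i})$ and let the Leibniz rule drop out directly (your specialization and the computation $(k-1)a(1)=0$ are both correct), the paper appeals to classical characterization theorems for derivations from Kuczma's monograph. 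Your route buys an elementary, citation-free proof; the paper's buys brevity. Both rest on the same two pillars: distinct-degree monomials are linearly independent, and a symmetric multi-additive function is determined by its trace (Lemma \ref{mainfact}).

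The caution you attach to the assertion $a(1)\in\{0,1\}$ is fully warranted: you have in fact located a defect in the statement itself, not a gap in your own argument. The paper disposes of this claim in a single sentence (``substituting this back \dots\ the only possibility is that $a(1)\in\{0,1\}$''), but substituting $a=a(1)\varphi$ back into $a(P(x))=P(a(x))$ only yields $c_{l}\,a(1)=c_{l}\,a(1)^{l}$ for every $l$, i.e.\ $a(1)=0$ or $a(1)^{l-1}=1$ for each $l\geq 1$ with $c_{l}\neq 0$, together with $a(1)=1$ whenever $c_{0}\neq 0$ --- exactly the dichotomy you derived. Your suspected counterexample is genuine: for $P(x)=x^{3}$ and $a(x)=-x$ on any $\mathbb{F}$ one has $a$ additive and $a(x^{3})=-x^{3}=(-x)^{3}=a(x)^{3}$, yet $a(1)=-1$; more generally $a=\omega\varphi$ with $\omega^{k-1}=1$ satisfies the equation for $P(x)=x^{k}$, so the final assertion fails for every pure power with $k\geq 3$. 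The correct conclusion is $a(1)=0$ or $a(1)^{d}=1$ with $d=\gcd\{\,l-1 : c_{l}\neq 0,\ l\geq 1\,\}$, and $a(1)\in\{0,1\}$ is guaranteed when $c_{0}\neq 0$ (your constant-term comparison) or more generally when $d=1$, e.g.\ when $k=2$. In short: your proof is sound, and the one step you singled out for scrutiny is precisely the step at which the paper's own proof is wrong.
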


\begin{proof}
\begin{enumerate}[(i)]
    \item Let $\mathbb{F}\subset \mathbb{C}$ be a field, $k\in \mathbb{N}$, $k\geq 2$ and $P\in \mathbb{Q}[x]$ be a (classical) polynomial of degree $k$. Suppose further that the additive function $a\colon \mathbb{F}\to \mathbb{C}$ fulfills 
    \[
    a(P(x))= P(a(x)) 
    \qquad 
    \left(x\in \mathbb{F}\right). 
    \]
    In other words, we have 
    \[
    a\left(\sum_{i=0}^{k}\alpha_{k}x^{k}\right)= \sum_{i=0}^{k}\alpha_{k}a(x)^{k}
    \]
    for all $x\in \mathbb{F}$ with some rational numbers $\alpha_{k}, \ldots, \alpha_{0}$. 
    Observe that this especially yields that the mapping 
    \[
    \mathbb{F} \ni x \longmapsto  a\left(\sum_{i=0}^{k}\alpha_{k}x^{k}\right)
    \]
    is a normal polynomial. Due to Lemma \ref{lem4} we infer that then the mapping 
    \[
    \mathbb{F} \ni x \longmapsto a(x^{k})
    \]
    is also a normal polynomial, where the $\mathbb{Q}$-homogeneity of $a$ was used, too. However, then necessarily 
    \[
    a(x^{k})= a(x)^{k} 
    \qquad 
    \left(x\in \mathbb{F}\right)
    \]
    holds. From this, we deduce (e.g. using the results of \cite{Her}) that there exists a homomorphism $\varphi\colon \mathbb{F}\to \mathbb{C}$ such that $a(x)= a(1)\varphi(x)$ for all $x\in \mathbb{F}$. Substituting this back to our equation, we finally get that the only possibility is that $a(1)\in \left\{ 0, 1\right\}$. 

    \item Using a similar reasoning that we did in case (i), here we deduce that the assumptions imply that necessarily 
    \[
    a(x^{k})= kx^{k-1}a(x) 
    \qquad 
    \left(x\in \mathbb{F}\right)
    \]
    holds for the additive function $a\colon \mathbb{F}\to \mathbb{C}$. Using some classical characterization theorems concerning derivations (for instance the results of \cite[Chapter 14]{Kuc}), we finally get that $a$ is indeed a derivation. 
\end{enumerate}
\end{proof}

\begin{rem}
We emphasize that the results of the previous statement are classical ones. 
Nevertheless, we would like to indicate that from one hand the problem we would like to consider in this paper has some prior results both in algebra and the theory of functional equations. Further, with the help of the statements presented here, the proofs can be significantly simplified (at least for mappings $a\colon \mathbb{F}\to \mathbb{C}$). 

In the papers \cite{Eba15, Eba17, EbaRieSah, GseKisVin18, GseKisVin19} further results can be found concerning additive functions that also fulfill certain polynomial equations. 
\end{rem}

\begin{rem}
We also note that related problems have already been considered by Z.~Boros and E. Garda–Mátyás in \cite{BorGar18, BorGar22} by Z.~Boros and R.~Menzer in \cite{BorMen22} and also by M. Amou in \cite{Amo20}. In these papers the authors consider real monomial functions,  which satisfy certain conditional equations on a specified planar curve. 
In \cite{Gse22}, the polynomial equation $f(P(x))=Q(f(x))$ for the monomial function $f\colon \mathbb{F}\to \mathbb{C}$ was considered. 
\end{rem}

The simplest special case of the problem we are interested in is when the generalized monomial $f\colon \mathbb{F}\to \mathbb{C}$ is of degree $1$, i.e., when $f$ is an additive function. In this regard, we have the following statement.

\begin{thm}
Let $k$ be a positive integer, $\mathbb{F}\subset \mathbb{C}$ be a field and $a\colon \mathbb{F}\to \mathbb{C}$ be an additive function. If the mapping 
\[
\mathbb{F} \ni x \longmapsto a(x^{k})
\]
is a (normal) polynomial, then $a$ is a generalized higher order derivation. 
\end{thm}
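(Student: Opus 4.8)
The plan is to transport the hypothesis from the additive to the multiplicative structure of $\mathbb{F}$ and then to recognise $a/\mathrm{id}$ as a generalized (exponential) polynomial on $\mathbb{F}^{\times}$, which is exactly the input the Kiss--Laczkovich description needs. Throughout I take $k\geq 2$, the substantive range that is also assumed in Lemmas \ref{lem4} and \ref{lemma_hom}. Since $a$ is additive it is a generalized monomial of degree $1$, so by Lemma \ref{lem_monomial_deg} the map $x\mapsto a(x^{k})$ is a generalized monomial of degree $k$; being simultaneously a normal polynomial, Lemma \ref{lemma_hom} provides a \emph{homogeneous} representation
\[
a(x^{k})=\sum_{\substack{\alpha\in\mathbb{N}^{m}\\ |\alpha|=k}}\lambda_{\alpha}\,b_{1}^{\alpha_{1}}(x)\cdots b_{m}^{\alpha_{m}}(x)
\qquad
\left(x\in\mathbb{F}\right)
\]
with linearly independent additive functions $b_{1},\ldots,b_{m}\colon\mathbb{F}\to\mathbb{C}$.

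Next I would pass to the multiplicative semigroup. Define $B\colon\mathbb{F}^{k}\to\mathbb{C}$ by $B(x_{1},\ldots,x_{k})=a(x_{1}\cdots x_{k})$. Since $a$ is additive and $t\mapsto ct$ is additive for every constant $c$, the function $B$ is symmetric and $k$-additive with respect to the addition of $\mathbb{F}$, and its trace is $B(x,\ldots,x)=a(x^{k})$. In characteristic zero $k!$ is invertible, so $B$ is determined by its trace through the Polarization formula (Theorem \ref{Thm_polarization}); applying the mixed difference operator to the homogeneous representation above exhibits $B$ as a finite sum of products $\prod_{j=1}^{k}b_{c_{j}}(y_{j})$ in which each of the $k$ variables occurs in precisely one factor. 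Dividing by $y_{1}\cdots y_{k}$ then shows that
\[
(\mathbb{F}^{\times})^{k}\ni (y_{1},\ldots,y_{k})\longmapsto \frac{a(y_{1}\cdots y_{k})}{y_{1}\cdots y_{k}}=\left(\frac{a}{\mathrm{id}}\right)(y_{1}\cdots y_{k})
\]
is again a finite sum of products of one-variable functions, hence decomposable. Applying Theorem \ref{thm_decomposable} to the commutative semigroup $(\mathbb{F}^{\times},\cdot)$ with $n=k\geq 2$ (working with the discrete topology, i.e.\ in the purely algebraic form of the statement) I would conclude that $\dfrac{a}{\mathrm{id}}$ is a generalized exponential polynomial on $\mathbb{F}^{\times}$.

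To finish, I would invoke the Kiss--Laczkovich type description (Theorem \ref{thm_KisLac} and its extension): an additive function whose quotient by the identity is a generalized (exponential) polynomial on $\mathbb{F}^{\times}$ is precisely a generalized higher order derivation, the genuine higher order derivations $\mathscr{D}_{n}(\mathbb{F})$ corresponding to the trivial exponential and the exponentials $\psi/\mathrm{id}$ arising from field homomorphisms $\psi$ accounting for the word \emph{generalized}. That exponential factors cannot be avoided is already visible from the hypothesis itself, since any field homomorphism $\psi$ satisfies $\psi(x^{k})=\psi(x)^{k}$, which is a normal polynomial, while $\psi/\mathrm{id}$ is a nontrivial exponential.

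I expect the decomposability step to be the main obstacle. One has to check carefully that the symmetrized polarization of the homogeneous normal polynomial really is a sum of products of single-variable functions, so that dividing by $y_{1}\cdots y_{k}$ preserves decomposability, and one must address the fact that Theorem \ref{thm_decomposable} is phrased for continuous functions --- here bypassed by the discrete topology and the algebraic form of the decomposability theorem. A secondary, more definitional point is to match the generalized exponential polynomial obtained for $a/\mathrm{id}$ with the precise notion of \emph{generalized higher order derivation}; should a representation with trivial exponential part be demanded, one would further use the $\mathbb{Q}$-homogeneity $a(rx)=r\,a(x)$ $(r\in\mathbb{Q})$ together with additivity to force every occurring exponential to be trivial.
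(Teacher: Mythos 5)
Your proposal is correct and follows essentially the same route as the paper's own proof: Lemma \ref{lemma_hom} for the homogeneous representation, polarization/uniqueness of the symmetric $k$-additive function with given trace to identify $(x_{1},\ldots,x_{k})\mapsto a(x_{1}\cdots x_{k})$ with the symmetrized sum of products, decomposability via Theorem \ref{thm_decomposable} on $(\mathbb{F}^{\times},\cdot)$, and then the Kiss--Laczkovich characterization (Theorem \ref{thm_KisLac}). The only cosmetic difference is that you divide by $y_{1}\cdots y_{k}$ so as to apply the decomposability theorem to $a/\mathrm{id}$ directly, whereas the paper applies it to $a$ itself and identifies the exponential as the identity afterwards; your explicit flagging of the discrete-topology point and of the role of nontrivial exponentials is, if anything, more careful than the paper's wording.
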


\begin{proof}
Let $k$ be a positive integer, $\mathbb{F}\subset \mathbb{C}$ be a field and $a\colon \mathbb{F}\to \mathbb{C}$ be an additive function. Suppose further that the mapping 
\[
\mathbb{F} \ni x \longmapsto a(x^{k})
\]
is a (normal) polynomial. Due to Lemma \ref{lemma_hom}, we can assume that this mapping is a \emph{homogeneous} (normal) polynomial, that is, we have 
\[
a(x^{k})
=
\sum_{\substack{\alpha_{1}, \ldots, \alpha_{m}\geq 0\\ 
\alpha_{1}+\cdots+ \alpha_{m}=k}}\lambda_{\alpha_{1}, \ldots, \alpha_{m}}a_{1}^{\alpha_{1}}(x)\cdots a_{m}^{\alpha_{m}}(x)
\]
for all $x\in \mathbb{F}$. Observe that both the sides of this identity is the trace of a symmetric and $k$-additive mapping. Indeed, the left hand side is the trace of the symmetric $k$-additive function 
\[
A(x_{1}, \ldots, x_{k})= a(x_{1}\cdots x_{k})
\qquad 
\left(x_{1}, \ldots, x_{k}\in \mathbb{F}\right)
\]
while the right hand side is the trace of the symmetric $k$-additive mapping 
\begin{multline*}
\widetilde{A}(x_{1}, \ldots, x_{k})\\
= 
\dfrac{1}{k!}\sum_{\sigma \in \mathcal{S}_{k}}
\sum_{\substack{\alpha_{1}, \ldots, \alpha_{m}\geq 0\\ 
\alpha_{1}+\cdots+ \alpha_{m}=k}}\lambda_{\alpha_{1}, \ldots, \alpha_{m}}
a_{1}(x_{\sigma(1)}) \cdots a_{1}(x_{\sigma(\alpha_{1})})
\cdots 
a_{m}(x_{\sigma(k-\alpha_{m}+1)}) \cdots a_{m}(x_{\sigma(k)})
\\
\left(x_{1}, \ldots, x_{k}\in \mathbb{F}\right). 
\end{multline*}
Therefore we have 
\[
    a(x_{1}\cdots x_{k})
  =
  \dfrac{1}{k!}\sum_{\sigma \in \mathcal{S}_{k}}
\sum_{\substack{\alpha_{1}, \ldots, \alpha_{m}\geq 0\\ 
\alpha_{1}+\cdots+ \alpha_{m}=k}}\lambda_{\alpha_{1}, \ldots, \alpha_{m}}
a_{1}(x_{\sigma(1)}) \cdots a_{1}(x_{\sigma(\alpha_{1})})
\cdots 
a_{m}(x_{\sigma(k-\alpha_{m}+1)}) \cdots a_{m}(x_{\sigma(k)})  
\]
for all $x_{1}, \ldots, x_{k}\in \mathbb{F}$. 

In other words, the mapping 
\[
\mathbb{F^{\times}}^{k} \ni (x_{1}, \ldots, x_{k})  \longmapsto a(x_{1}\cdots x_{k})
\]
is decomposable. Thus from Theorem \ref{thm_decomposable} we deduce that $a\colon \mathbb{F}^{\times}\to \mathbb{C}$ is a generalized exponential polynomial on the multiplicative group $\mathbb{F}^{\times}$ corresponding to the identity function, as exponential. Using Theorem \ref{thm_KisLac}, we finally obtain that $a$ is a generalized higher order derivation. 
\end{proof}

\begin{thm}\label{thm7}
Let $n\in \mathbb{N}$, $n\geq 2$ and $\mathbb{F}\subset \mathbb
C$ be a field. Assume that $f\colon \mathbb{F}\to \mathbb{C}$
is a quadratic function, while $a\colon \mathbb{F}\to \mathbb{C}$ is additive and we have 
\begin{equation}\label{Eq_1}
f(x^{n})= a(x)^{2n} 
\qquad 
\left(x\in \mathbb{F}\right). 
\end{equation}
Then there exists a complex constant $\alpha\in \mathbb{C}$ and a homomorphism $\varphi\colon \mathbb{F}\to \mathbb{C}$ such that 
\[
a(1)= \alpha \varphi(x) 
\qquad 
\text{and}
\qquad 
f(x)= \alpha^{2n}\varphi(x)^{2} 
\qquad 
\left(x\in \mathbb{F}\right). 
\]
And also conversely, if we define the functions $a$ and $f$ through the above formula then they satisfy equation \eqref{Eq_1} for all $x\in \mathbb{F}$. 
\end{thm}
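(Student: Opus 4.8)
The plan is to reduce the functional equation to a statement about the multiplicativity of $a$ (up to a constant) by passing to the associated symmetric multi-additive functions. Since $f$ is quadratic, write $f=F^{\ast}$ with $F$ symmetric and bi-additive. By Lemma \ref{lem_monomial_deg} the map $x\mapsto f(x^{n})$ is a generalized monomial of degree $2n$, and by the construction in Lemma \ref{lem3} its generating symmetric $2n$-additive function is the symmetrization $\mathscr{F}$ of $(x_{1},\dots ,x_{2n})\mapsto F(x_{\sigma(1)}\cdots x_{\sigma(n)},x_{\sigma(n+1)}\cdots x_{\sigma(2n)})$, while $x\mapsto a(x)^{2n}$ is the trace of $(x_{1},\dots ,x_{2n})\mapsto a(x_{1})\cdots a(x_{2n})$. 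Because $\mathrm{char}(\mathbb{F})=0$, multiplication by $(2n)!$ is injective on $\mathbb{C}$, so Lemma \ref{mainfact} applied to the difference of these two symmetric $2n$-additive functions forces them to coincide:
\[
\mathscr{F}(x_{1},\dots ,x_{2n})=a(x_{1})\cdots a(x_{2n})
\qquad (x_{1},\dots ,x_{2n}\in\mathbb{F}).
\]
This single identity is the whole content of the hypothesis, and everything else is extracted from it by specialization.

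Set $c=a(1)$. First I would substitute $x_{1}=\dots =x_{2n-1}=1$ and then $x_{1}=\dots =x_{2n-2}=1$, leaving one, respectively two, free variables; a short count of how the free arguments distribute between the two factors of $F$ yields $F(x,1)=c^{2n-1}a(x)$ and, after eliminating $F(xy,1)$, the closed form
\[
F(x,y)=\tfrac{2n-1}{n}\,c^{2n-2}a(x)a(y)-\tfrac{n-1}{n}\,c^{2n-1}a(xy)
\qquad (x,y\in\mathbb{F}).
\]
If $c=0$ this gives $F\equiv 0$, hence $f\equiv 0$ and $a(x)^{2n}\equiv 0$, so $a\equiv 0$; this is the conclusion with $\alpha=0$. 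Assume $c\neq 0$ and put $\psi=a/c$, an additive function with $\psi(1)=1$. Substituting three free variables (the rest equal to $1$) and inserting the closed form of $F$ produces, after simplification, the relation
\[
\psi(x_1)\psi(x_2x_3)+\psi(x_2)\psi(x_1x_3)+\psi(x_3)\psi(x_1x_2)-\psi(x_1x_2x_3)=2\psi(x_1)\psi(x_2)\psi(x_3),
\]
which, for the defect $D(x,y)=\psi(xy)-\psi(x)\psi(y)$, is exactly the twisted Leibniz rule $D(x_1x_2,x_3)=\psi(x_1)D(x_2,x_3)+\psi(x_2)D(x_1,x_3)$. Notably this relation is independent of $n$.

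The crux, and the step I expect to be the main obstacle, is to deduce $D\equiv 0$, i.e.\ that $\psi$ is multiplicative. The twisted Leibniz rule gives, for fixed $x$, the linear recursion $\psi(x^{m+1})=2\psi(x)\psi(x^{m})-(\psi(x)^{2}-d(x))\psi(x^{m-1})$ with $d(x)=D(x,x)$, whence $\psi(x^{m})=\tfrac12(u^{m}+v^{m})$, where $u,v=\psi(x)\pm\sqrt{d(x)}$ are the roots of $T^{2}-2\psi(x)T+(\psi(x)^{2}-d(x))$. Reading off $f(x)=c^{2n}\psi(x)^{2}-\tfrac{n-1}{n}c^{2n}d(x)$ from the closed form of $F$, the diagonal hypothesis $f(x^{n})=a(x)^{2n}$ becomes a single homogeneous polynomial identity in $u,v$; one checks it vanishes to second order along $u=v$ and factors as $d(x)^{2}\,R_{n}(\psi(x)^{2},d(x))=0$, whose remaining roots force $d(x)=\kappa_{i}\psi(x)^{2}$ for finitely many nonzero constants $\kappa_{i}$ (for $n=2$ the identity is simply $(u-v)^{4}=0$, so $d(x)=0$ at once). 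Thus for each $x$ either $d(x)=0$ or $d(x)/\psi(x)^{2}$ lies in a fixed finite set; restricting to a line $x_{0}+ty$ and noting that $t\mapsto d(x_{0}+ty)/\psi(x_{0}+ty)^{2}$ is a rational function taking only finitely many values shows this ratio is globally constant, say $\kappa$, so $d=\kappa\,\psi^{2}$ on all of $\mathbb{F}$. Polarizing yields $D(x,y)=\kappa\,\psi(x)\psi(y)$, i.e.\ $\psi(xy)=(1+\kappa)\psi(x)\psi(y)$; setting $y=1$ and using $\psi(1)=1$ forces $\kappa=0$. Hence $D\equiv 0$, so $\psi(x^{2})=\psi(x)^{2}$ and, by a further polarization, $\psi$ is a homomorphism $\varphi$. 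Substituting $d\equiv 0$ back into the formula for $f$ gives $f(x)=c^{2n}\varphi(x)^{2}$ and $a(x)=c\,\varphi(x)$, so the assertion holds with $\alpha=c=a(1)$. The converse is a direct check: if $a=\alpha\varphi$ and $f=\alpha^{2n}\varphi^{2}$ with $\varphi$ a homomorphism, then $f(x^{n})=\alpha^{2n}\varphi(x^{n})^{2}=\alpha^{2n}\varphi(x)^{2n}=(\alpha\varphi(x))^{2n}=a(x)^{2n}$.
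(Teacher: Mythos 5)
Your proof is correct, and from the midpoint on it takes a genuinely different route than the paper. Up to the explicit constants the two arguments coincide: you both symmetrize \eqref{Eq_1} into the $2n$-additive identity $\mathscr{F}(x_{1},\dots,x_{2n})=a(x_{1})\cdots a(x_{2n})$ (the paper phrases this as the vanishing of the trace of an auxiliary map $E$, you via Lemma \ref{mainfact} --- same content), then substitute $1$'s to get $F(x,1)=c^{2n-1}a(x)$ and a representation of $F(x,y)$ as a combination of $a(x)a(y)$ and $a(xy)$; the paper leaves the coefficients as unspecified $\lambda,\alpha_{1},\alpha_{2}$, while your values $\tfrac{2n-1}{n}$ and $\tfrac{n-1}{n}$ are correct (the two-variable count gives weights $\tfrac{n}{2n-1}$ and $\tfrac{n-1}{2n-1}$). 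At this point the paper substitutes $f(x)=\alpha_{1}a(x^{2})+\alpha_{2}a(x)^{2}$ back into \eqref{Eq_1} to obtain $\alpha_{1}a(x^{2n})+\alpha_{2}a(x^{n})^{2}=a(x)^{2n}$ and closes by citing Theorem 12 of \cite{GseKisVin19}, whereas you in effect re-prove the needed special case of that theorem from scratch: the three-variable specialization, the twisted Leibniz rule for $D(x,y)=\psi(xy)-\psi(x)\psi(y)$, the recursion giving $\psi(x^{m})=\tfrac12(u^{m}+v^{m})$, and the factorization of the diagonal identity. I verified the computational claims: with $s=\psi(x)$, $t=d(x)$ the identity becomes $\tfrac1n(u^{2n}+v^{2n})+\tfrac{4n-2}{n}u^{n}v^{n}=4^{1-n}(u+v)^{2n}$, whose coefficients in $t^{0}$ and $t^{1}$ vanish identically (the latter because $\tfrac2n\binom{2n}{2}=4n-2$), while the coefficient of $t^{2}$ equals $\tfrac{2n(n-1)(2n-1)}{3}\neq0$ and the top coefficient equals $\tfrac{2+(-1)^{n}(4n-2)}{n}\neq0$; the latter also yields $\psi(x)=0\Rightarrow d(x)=0$, which you need (but do not state) at points where the denominator of your ratio vanishes in the globalization step. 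That step itself is sound provided you run it over \emph{rational} parameters: $p(t)=d(x_{0}+ty)$ and $q(t)=\psi(x_{0}+ty)^{2}$ are polynomials in $t\in\mathbb{Q}$ by $\mathbb{Q}$-homogeneity, and since $p\cdot\prod_{i}\bigl(p-\kappa_{i}q\bigr)$ vanishes at every rational $t$, one factor vanishes identically --- this is the precise form of your ``finitely many values, hence constant'' argument. As for what each route buys: the paper's proof is shorter by outsourcing the hardest step to \cite{GseKisVin19}; yours is self-contained, produces the explicit coefficients, and explicitly disposes of the degenerate case $a(1)=0$, which the paper leaves implicit in the citation (note also that the theorem's displayed conclusion $a(1)=\alpha\varphi(x)$ is evidently a typo for $a(x)=\alpha\varphi(x)$, which is exactly what you prove, with $\alpha=a(1)$).
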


\begin{proof}
Since $f$ is quadratic, there exists a uniquely determined symmetric and bi-additive mapping $F\colon \mathbb{F}^{2}\to \mathbb{C}$ such that 
\[
F(x, x)= f(x) 
\qquad 
\left(x\in \mathbb{F}\right). 
\]
Define the mapping $E\colon \mathbb{F}^{2n}\to \mathbb{C}$ by 
\[
E(x_{1}, \ldots, x_{2n})= \frac{1}{(2n)!} \sum_{\sigma\in \mathscr{S}_{2n}}
F(x_{\sigma(1)}\cdots x_{\sigma(n)}, x_{\sigma(n+1)} \cdots x_{\sigma(2n)})- 
a(x_{1})\cdots a(x_{2n}) 
\qquad 
\left(x_{1}, \ldots, x_{2n}\in \mathbb{F}\right), 
\]
where $\mathscr{S}_{2n}$ denotes the symmetric group of order $2n$. 
Since $F$ is bi-additive and $a$ is additive, the mapping $E$ is a symmetric and $2n$-additive mapping. Further its trace is 
\[
E(x, \ldots, x)= F(x^{n}, x^{n})-a(x)^{2n}= f(x^{n})-a(x)^{2n}=0 
\qquad 
\left(x\in \mathbb{F}\right), 
\]
due to the above equation. At the same time, its trace uniquely determines $E$. Thus the function $E$ vanishes identically, that is, 
\begin{equation}\label{symm}
\frac{1}{(2n)!} \sum_{\sigma\in \mathscr{S}_{2n}}
F(x_{\sigma(1)}\cdots x_{\sigma(n)}, x_{\sigma(n+1)} \cdots x_{\sigma(2n)})- 
a(x_{1})\cdots a(x_{2n})=0
\end{equation}
holds for all $x_{1}, \ldots, x_{2n}\in \mathbb{F}$. With the substitution 
$x_{i}=1$ for  $i=1, \ldots, 2n$ we get that 
\[
F(1, 1)-a(1)^{2n}= f(1)-a(1)^{2n}=0. 
\]
Let now $x\in \mathbb{F}$ be arbitrary and let 
\[
x_{1}= x \qquad 
\text{and}
\qquad 
x_{i}=1 
\qquad 
\text{for}
\qquad 
i=2, \ldots, 2n
\]
to deduce that 
\[
\mu_{1}F(x, 1)-\mu_{2}a(x)=0 
\qquad 
\left(x\in \mathbb{F}\right), 
\]
that is 
\begin{equation}\label{sym1}
F(x, 1)= \lambda a(x) 
\qquad 
\left(x\in \mathbb{F}\right)
\end{equation}
with an appropriate complex constant $\lambda$. 

Let again $x\in \mathbb{F}$ be arbitrary and let now 
\[
x_{1}= x, \; x_{2}= x \qquad 
\text{and}
\qquad 
x_{i}=0
\qquad 
\text{for}
\qquad 
i=3, \ldots, 2n
\]
in \eqref{symm} to obtain 
\[
\nu_{1}F(x, x)+\nu_{2}F(x^{2}, 1)+\nu_{3}a(x)^{2}=0
\qquad 
\left(x\in \mathbb{F}\right). 
\]
The latter identity together with \eqref{sym1} yields that there exists complex constants $\alpha_{1}, \alpha_{2}$ such that 
\[
F(x, x)= \alpha_{1}a(x^{2})+\alpha_{2}a(x)^{2} 
\qquad 
\left(x\in \mathbb{F}\right), 
\]
in other words, we have 
\[
f(x)= \alpha_{1}a(x^{2})+\alpha_{2}a(x)^{2} 
\qquad 
\left(x\in \mathbb{F}\right). 
\]
If we substitute this representation back to \eqref{Eq_1}, we obtain that 
\[
\alpha_{1}a(x^{2n})+\alpha_{2}a(x^{n})^{2}= a(x)^{2n}
\]
for all $x\in \mathbb{F}$. According to Theorem 12 of \cite{GseKisVin19}, there exists a complex constant $\alpha$ and a homomorphism $\varphi\colon \mathbb{F}\to \mathbb{C}$ such that 
\[
a(x)= \alpha \cdot \varphi(x) 
\qquad 
\left(x\in \mathbb{F}\right)
\]
and hence 
\[
f(x)= \alpha^{2n}\varphi(x)^{2}
\]
is fulfilled for all $x\in \mathbb{F}$.

The converse is an easy computation. 
\end{proof}

\begin{thm}
Let $\mathbb{F}\subset \mathbb{C}$ be a field, $f\colon \mathbb{F}\to \mathbb{C}$ be a quadratic function, while $a_{1}, a_{2}\colon \mathbb{F}\to \mathbb{C}$ be additive functions. 
Then equation 
\[
f(x^{2})= a_{1}(x)^{2}a_{2}(x)^{2}
\]
holds for all $x\in \mathbb{F}$ if and only if there exist homomorphisms $\varphi_{1}, \varphi_{2}\colon \mathbb{F}\to \mathbb{C}$ such that 
\[
f(x)= f(1)\cdot \varphi_{1}(x)\varphi_{2}(x) 
\qquad 
\text{and}
\qquad 
a_{i}(x)= a_{i}(1)\varphi_{i}(x) 
\qquad 
\left(x\in \mathbb{F}, i=1, 2\right). 
\]
\end{thm}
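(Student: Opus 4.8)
The plan is to imitate the proof of Theorem~\ref{thm7}, with the single additive map replaced by the pair $a_1,a_2$. The converse is a direct substitution: if $a_i=a_i(1)\varphi_i$ and $f=f(1)\varphi_1\varphi_2$ with $\varphi_1,\varphi_2$ homomorphisms, then $f(x^2)=f(1)\varphi_1(x)^2\varphi_2(x)^2$ while $a_1(x)^2a_2(x)^2=a_1(1)^2a_2(1)^2\varphi_1(x)^2\varphi_2(x)^2$, and these agree because evaluating the equation at $x=1$ gives $f(1)=a_1(1)^2a_2(1)^2$.

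For the forward direction let $F\colon\mathbb{F}^2\to\mathbb{C}$ be the unique symmetric bi-additive map with $F(x,x)=f(x)$, so $f(x^2)=F(x^2,x^2)$. By Lemma~\ref{lem3} the function $x\mapsto F(x^2,x^2)$ is the trace of the symmetric $4$-additive map $\tfrac{1}{4!}\sum_{\sigma\in\mathscr{S}_4}F(x_{\sigma(1)}x_{\sigma(2)},x_{\sigma(3)}x_{\sigma(4)})$, while $x\mapsto a_1(x)^2a_2(x)^2$ is the trace of the symmetrization of $(x_1,x_2,x_3,x_4)\mapsto a_1(x_1)a_1(x_2)a_2(x_3)a_2(x_4)$. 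Since a symmetric multi-additive map is determined by its trace (Lemma~\ref{mainfact}), these two $4$-additive maps coincide for all arguments.

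I would then substitute $x_i\in\{x,1\}$ into this identity. Taking $x_1=x$ and the remaining variables equal to $1$ expresses $F(x,1)$ as a linear combination of $a_1(x)$ and $a_2(x)$; taking $x_1,x_2$ free and $x_3=x_4=1$ then yields an explicit formula for $F(x_1,x_2)$ as a combination of $a_1(x_1)a_1(x_2)$, $a_2(x_1)a_2(x_2)$, $a_1(x_1)a_2(x_2)+a_1(x_2)a_2(x_1)$, $a_1(x_1x_2)$ and $a_2(x_1x_2)$, with coefficients depending only on $a_1(1),a_2(1)$. Putting $x_1=x_2=x$ gives $f$ as a combination of $a_1(x)^2$, $a_2(x)^2$, $a_1(x)a_2(x)$, $a_1(x^2)$, $a_2(x^2)$; inserting $x^2$ into both arguments of $F$ and comparing with $F(x^2,x^2)=a_1(x)^2a_2(x)^2$ produces a single polynomial equation in $a_1,a_2$ alone, of the form
\[
c_1a_1(x^2)^2+c_2a_2(x^2)^2+c_3a_1(x^2)a_2(x^2)+c_4a_1(x^4)+c_5a_2(x^4)=a_1(x)^2a_2(x)^2,
\]
where the constants $c_1,\dots,c_5$ are determined by $a_1(1)$ and $a_2(1)$.

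The main obstacle is solving this reduced equation for $(a_1,a_2)$; since two possibly distinct additive maps occur, the single-function result quoted in Theorem~\ref{thm7} (Theorem~12 of \cite{GseKisVin19}) does not apply verbatim. I would split according to whether $a_1(1)a_2(1)$ vanishes: when it does, several coefficients drop out and the equation simplifies drastically, forcing, via the absence of zero divisors in $\mathbb{C}$ together with the additivity of $a_1,a_2$, a degenerate solution (in particular $f\equiv0$ if both constants vanish), for which the asserted form is immediate. In the principal case $a_1(1)a_2(1)\neq0$, I expect the product structure of the right-hand side to force each $a_i(x^2)$ to be proportional to $a_i(x)^2$; once this is established, the classical square-preserving argument (as in \cite{Her}) shows that each $a_i$ is a scalar multiple of a homomorphism $\varphi_i$, namely $a_i=a_i(1)\varphi_i$, and reinserting this into the representation of $f$ yields $f(x)=f(1)\varphi_1(x)\varphi_2(x)$. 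The delicate points are the bookkeeping of the symmetrization coefficients and the rigorous separation, inside the reduced equation, of the monomial families $\{a_i(x^2)^2,\,a_1(x^2)a_2(x^2)\}$ from $\{a_i(x^4)\}$, which is what makes the last step genuinely harder than in Theorem~\ref{thm7}.
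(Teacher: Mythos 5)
Your first half coincides with the paper's own argument: the converse by evaluating at $x=1$ (giving $f(1)=a_1(1)^2a_2(1)^2$), the passage to the symmetric $4$-additive map obtained by symmetrizing $F(x_{\sigma(1)}x_{\sigma(2)},x_{\sigma(3)}x_{\sigma(4)})$ minus the symmetrization of $a_1(x_1)a_1(x_2)a_2(x_3)a_2(x_4)$, the conclusion that it vanishes identically because its trace does (Lemma \ref{mainfact}), the substitutions $x_i\in\{x,1\}$ expressing first $F(x,1)$ and then $F(x,y)$ through $a_1,a_2$, and the resulting reduced quartic identity with the case split on $a_1(1)a_2(1)=0$ and the normalization $a_1(1)=a_2(1)=1$. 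Up to that point your plan is a faithful reconstruction.

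The genuine gap is at exactly the step you flag as delicate: you only \emph{expect} the reduced equation $a_1(x^4)+a_2(x^4)=a_1(x^2)^2+a_2(x^2)^2+4a_1(x^2)a_2(x^2)-4a_1(x)^2a_2(x)^2$ to force $a_i(x^2)$ proportional to $a_i(x)^2$, and you propose no mechanism. None of the separation tools you invoke can supply one: every term here is a generalized monomial of degree $4$ in $x$, so the degree-comparison argument of Lemma \ref{lem4} is vacuous, and linear independence of products $a^{\alpha}$ of additive functions does not apply to compositions such as $a_i(x^2)$ and $a_i(x^4)$, which are not a priori polynomials in additive functions of $x$. The paper resolves this by a second, genuinely multiplicative argument that your proposal lacks: it symmetrizes the reduced identity again, fixes one variable $z$ to obtain a Levi--Civita-type equation $A_z(xy)=a_1(x)g_z(y)+a_2(x)h_z(y)+k_z(x)a_1(y)+l_z(x)a_2(y)$ on the multiplicative group $\mathbb{F}^{\times}$, and concludes that $A_z$ is an exponential polynomial of degree at most $4$ there; then it splits into two alternatives. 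If the system $a_1,a_2,k_z,l_z$ is linearly dependent, then $a_1=a_2$ (using the normalization), and this subcase is finished by Theorem \ref{thm7} --- note that the single-function result you correctly observe ``does not apply verbatim'' is precisely the tool for this degenerate branch. If the system is independent, then $a_1,a_2$ are themselves exponential polynomials of degree at most $4$ on $\mathbb{F}^{\times}$, and checking the possibilities against their additivity on $\mathbb{F}$ forces them to be exponentials on $\mathbb{F}^{\times}$, i.e.\ field homomorphisms up to the constants $a_i(1)$. Without some such argument on the multiplicative group your final step is an unproved claim, not a proof. A secondary discrepancy: in the degenerate case the paper asserts that $a_1(1)a_2(1)=0$ (one vanishing constant) already kills $f$, whereas you only dispose of the case where both constants vanish; with, say, $a_1(1)=0\neq a_2(1)$ the representation gives $4F(x,y)=a_2(1)^2a_1(x)a_1(y)$ and the identity $a_2(1)^2a_1(x^2)^2=4a_1(x)^2a_2(x)^2$ still requires an argument, so this branch cannot simply be waved through either.
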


\begin{proof}
Let $\mathbb{F}\subset \mathbb{C}$ be a field, $f\colon \mathbb{F}\to \mathbb{C}$ be a quadratic function, while $a_{1}, a_{2}\colon \mathbb{F}\to \mathbb{C}$ be additive function. 
Suppose further that for all $x\in \mathbb{F}$, equation 
\[
f(x^{2})= a_{1}(x)^{2}a_{2}(x)^{2}
\]
is fulfilled. Since $f$ is quadratic, there exists a uniquely determined symmetric and bi-additive mapping $F$ such that 
\[
F(x, x)= f(x) 
\qquad 
\left(x\in \mathbb{F}\right). 
\]
Define the mapping $\Phi$ on $\mathbb{F}^{4}$ by 
\begin{multline*}
\Phi(x_{1}, x_{2}, x_{3}, x_{4})= 
\dfrac{F\left(x_{1}\,x_{4} , x_{2}\,x_{3}\right)+F\left(x_{1}\,x_{3} , 
 x_{2}\,x_{4}\right)+F\left(x_{1}\,x_{2} , x_{3}\,x_{4}\right)}{3}
 \\
 -{{a_{1}(x_{1})\,a_{1}(x_{2})\,a_{2}(x_{3})\,a_{2}(x_{4})+a_{1}(
 x_{1})\,a_{2}(x_{2})\,a_{1}(x_{3})\,a_{2}(x_{4})+a_{2}(x_{1})\,a_{1}
 (x_{2})\,a_{1}(x_{3})\,a_{2}(x_{4})}\over{6}}
 \\
 -{a_{1}(x_{1})\,a_{2}(x_{2})\,a_{2
 }(x_{3})\,a_{1}(x_{4})+a_{2}(x_{1})\,a_{1}(x_{2})\,a_{2}(x_{3})\,a_{
 1}(x_{4})+a_{2}(x_{1})\,a_{2}(x_{2})\,a_{1}(x_{3})\,a_{1}(x_{4})
 \over{6}}
 \\
 \left(x_{1}, x_{2}, x_{3}, x_{4}\in \mathbb{F}\right). 
\end{multline*}
Since $F$ is symmetric and bi-additive and $a_{1}$ and $a_{2}$ are additive, the function $\Phi$ is a symmetric $4$-additive mapping. Moreover its trace is 
\[
F\left(x^2 , x^2\right)-a_{1}(x)^2\,a_{2}(x)^2=0
\qquad 
\left(x\in \mathbb{F}\right). 
\]
Thus $\Phi$ is identically zero on $\mathbb{F}^{4}$. 
From this we especially get that 
\[
2F\left(x^2 , 1\right)-\,a_{1}(1)^2\,a_{2}(1)\,a_{2}(x^2)-\,a_{
 1}(1)\,a_{2}(1)^2\,a_{1}(x^2)= 0
\qquad 
\left(x\in \mathbb{F}\right)
\]
and also 
\[
2\,F\left(x^2 , 1\right)+4\,F\left(x , x\right)-a_{1}(1)^2\,a_{2}(x
 )^2-4\,a_{1}(1)\,a_{2}(1)\,a_{1}(x)\,a_{2}(x)-a_{2}(1)^2\,a_{1}(x)^2=0
 \qquad 
 \left(x\in \mathbb{F}\right). 
\]
These identities together imply that 
\begin{multline*}
4\,F\left(x , x\right)+a_{1}(1)^2\,a_{2}(1)\,a_{2}(x^2)+a_{1}(1)\,a_{2}(1)^2\,a_{1}(x^2)-a_{1}(1)^2\,a_{2}(x)^2
\\
-4\,a_{1}(1)\,a_{2}(1)\,
 a_{1}(x)\,a_{2}(x)-a_{2}(1)^2\,a_{1}(x)^2=0
\\ 
\left(x\in \mathbb{F}\right). 
\end{multline*}
In other words, we have 
\[
4F(x, x)= -a_{1}(1)^2\,a_{2}(1)\,a_{2}(x^2)-a_{1}(1)\,a_{2}(1)^2\,a_{1}(x^2)+
 a_{1}(1)^2\,a_{2}(x)^2+4\,a_{1}(1)\,a_{2}(1)\,a_{1}(x)\,a_{2}(x)+a_{
 2}(1)^2\,a_{1}(x)^2
\]
for all $x\in \mathbb{F}$. Since $F$ is a symmetric and bi-additive mapping, we have 
\begin{multline*}
4F(x, y)= 
-a_{1}(1)^2\,a_{2}(1)\,a_{2}(x\,y)-a_{1}(1)\,a_{2}(1)^2\,a_{1}(x\,y
 )
 \\
 +2\,\left(a_{1}(1)\,a_{2}(1)\,a_{1}(x)\,a_{2}(y)+a_{1}(1)\,a_{2}(1)
 \,a_{2}(x)\,a_{1}(y)\right)+a_{1}(1)^2\,a_{2}(x)\,a_{2}(y)+a_{2}(1)^
 2\,a_{1}(x)\,a_{1}(y)
\end{multline*}
for all $x, y\in \mathbb{F}$. 
Combining this with our functional equation, we deduce 
\begin{multline*}
    -a_{1}(1)^2\,a_{2}(1)\,a_{2}(x^4)-a_{1}(1)\,a_{2}(1)^2\,a_{1}(x^4)+
 a_{1}(1)^2\,a_{2}(x^2)^2
 \\
 +4\,a_{1}(1)\,a_{2}(1)\,a_{1}(x^2)\,a_{2}(x^
 2)+a_{2}(1)^2\,a_{1}(x^2)^2-4\,a_{1}(x)^2\,a_{2}(x)^2=0
 \\ 
(x\in \mathbb{F}).
\end{multline*}
Observe that if $a_{1}(1)\cdot a_{2}(1)=0$, then $F$ and thus $f$ is identically zero. So we may assume $a_{1}(1)\cdot a_{2}(1)\neq 0$. 
Without the loss of generality we can then suppose that $a_{1}(1)=a_{1}(1)=1$, otherwise we consider the mappings 
\[
\tilde{f}(x)= \frac{f(x)}{a_{1}(1)\cdot a_{2}(1)}
\qquad 
\text{and}
\qquad 
\tilde{a_{i}}(x)= \frac{a_{i}(x)}{a_{i}(1)} 
\qquad 
\left(x\in \mathbb{F}, i=1, 2\right). 
\]
Then this equation reduces to 
\[
a_{2}(x^4)+a_{1}(x^4)=a_{2}(x^2)^2+a_{1}
 (x^2)^2+4\,a_{1}(x^2)\,a_{2}(x^2)-4\,a_{1}(x)^2\,a_{2}(x)^2
 \qquad 
 \left(x\in \mathbb{F}\right). 
\]
After symmetrization, we get that this is equivalent to 
\begin{multline*}
    -a_{2}(x_{1}\,x_{2}\,x_{3}\,x_{4})-a_{1}(x_{1}\,x_{2}\,x_{3}\,x_{4}
 )
 \\
 +\dfrac{1}{3}\left[{a_{2}(x_{1}\,x_{2})\,a_{2}(x_{3}\,x_{4})+a_{2}(x_{1}\,x_{3})\,a
 _{2}(x_{2}\,x_{4})+a_{2}(x_{2}\,x_{3})\,a_{2}(x_{1}\,x_{4})}\right]
 \\
 +\frac{2}{3} \left[a_{1}(x_{1}\,x_{2})\,a_{2}(x_{3}\,x_{4})+a_{2}(x_{1}\,
 x_{2})\,a_{1}(x_{3}\,x_{4})+a_{1}(x_{1}\,x_{3})\,a_{2}(x_{2}\,x_{4})
 \right. 
 \\
 \left. 
 +a_{2}(x_{1}\,x_{3})\,a_{1}(x_{2}\,x_{4})+a_{1}(x_{2}\,x_{3})\,a_{2}
 (x_{1}\,x_{4})+a_{2}(x_{2}\,x_{3})\,a_{1}(x_{1}\,x_{4})
 \right]
 \\
 +\dfrac{1}{3}\left[{a_{1}(x_{1}\,x_{2})\,a_{1}(x_{3}\,x_{4})+a_{1}(x_{1}\,
 x_{3})\,a_{1}(x_{2}\,x_{4})+a_{1}(x_{2}\,x_{3})\,a_{1}(x_{1}\,x_{4})
 }\right]
 \\
 -\dfrac{2}{3}\,\left[a_{1}(x_{1})\,a_{1}(x_{2})\,a_{2}(x_{3})\,a_{2
 }(x_{4})+a_{1}(x_{1})\,a_{2}(x_{2})\,a_{1}(x_{3})\,a_{2}(x_{4})+a_{2
 }(x_{1})\,a_{1}(x_{2})\,a_{1}(x_{3})\,a_{2}(x_{4})
 \right. 
 \\
 \left. 
 +a_{1}(x_{1})\,a_{
 2}(x_{2})\,a_{2}(x_{3})\,a_{1}(x_{4})+a_{2}(x_{1})\,a_{1}(x_{2})\,a
 _{2}(x_{3})\,a_{1}(x_{4})+a_{2}(x_{1})\,a_{2}(x_{2})\,a_{1}(x_{3})\,
 a_{1}(x_{4})\right]=0
 \\
 \left(x_{1}, x_{2}, x_{3}, x_{4}\in \mathbb{F}\right). 
\end{multline*}
From this we deduce that for all $x, y, z\in \mathbb{F}$ we have 
\begin{multline*}
    3\,a_{2}(x\,y\,z)+3\,a_{1}(x\,y\,z)
    \\
    =
    a_{2}(x)\,a_{2}(y\,z)+2\,a_{1}
 (x)\,a_{2}(y\,z)+2\,a_{2}(x)\,a_{1}(y\,z)+a_{1}(x)\,a_{1}(y\,z)+a_{2
 }(y)\,a_{2}(x\,z)
 \\
 +2\,a_{1}(y)\,a_{2}(x\,z)+2\,a_{2}(y)\,a_{1}(x\,z)+
 a_{1}(y)\,a_{1}(x\,z)+a_{2}(x\,y)\,a_{2}(z)+2\,a_{1}(x\,y)\,a_{2}(z)
 \\
 -2\,a_{1}(x)\,a_{2}(y)\,a_{2}(z)-2\,a_{2}(x)\,a_{1}(y)\,a_{2}(z)-2\,
 a_{1}(x)\,a_{1}(y)\,a_{2}(z)+2\,a_{2}(x\,y)\,a_{1}(z)+a_{1}(x\,y)\,a
 _{1}(z)
 \\
 -2\,a_{2}(x)\,a_{2}(y)\,a_{1}(z)-2\,a_{1}(x)\,a_{2}(y)\,a_{1}
 (z)-2\,a_{2}(x)\,a_{1}(y)\,a_{1}(z)
\end{multline*}
For all $z\in \mathbb{F}$, define the additive function by 
\[
A_{z}(x)= 3a_{1}(xz)-a_{1}(x)\left[a_{1}(z)+2a_{2}(x)\right]
+3a_{2}(xz)-a_{2}(x)\left[a_{2}(z)+2a_{1}(x)\right]
\qquad 
\left(x\in \mathbb{F}\right). 
\]
With this notation the above identity turns to 
\[
A_{z}(xy)= a_{1}(x)g_{z}(y)+a_{2}(x)h_{z}(y)+k_{z}(x)a_{1}(y)+l_{z}(x)a_{2}(y) 
\qquad 
\left(x, y, z\in \mathbb{F}\right), 
\]
with appropriately defined functions $g_{z}, h_{z}, k_{z}, l_{z}$, yielding that the mapping $A_{z}$ is a (normal) exponential polynomial of degree at most $4$ on the multiplicative group $\mathbb{F}^{\times}$. Further, 
\begin{enumerate}[(A)]
    \item either the system $a_{1}, a_{2}, k_{z}, l_{z}$ is linearly dependent
    \item or the system $a_{1}, a_{2}, k_{z}, l_{z}$ is linearly independent. 
\end{enumerate}
Alternative $(A)$ holds only if the functions $a_{1}$ and $a_{2}$ are the same (note that we assumed that $a_{1}(1)=a_{2}(1)=1$), but then Theorem \ref{thm7} applies and we deduce that there exists a homomorphism $\varphi\colon \mathbb{F}\to \mathbb{C}$ such that 
\[
f(x)= f(1)\varphi(x)^{2} 
\qquad 
a_{i}(x)= a_{i}(1)\varphi(x) 
\qquad 
\left(x\in \mathbb{F}\right). 
\]
If alternative $(B)$ holds, then we get that not only the mapping $A_{z}$, but also the functions $a_{1}, a_{2}, k_{z}, l_{z}$ are (normal) exponential polynomials on the multiplicative group $\mathbb{F}^{\times}$ of degree at most four. Especially, $a_{1}$ and $a_{2}$ are linearly independent (normal) exponential polynomials of degree at most four. Checking all the possibilities, we finally get that this can happen only if $a_{1}$ and $a_{2}$ are exponentials on the multiplicative group $\mathbb{F}^{\times}$. Since these functions were assumed also to be additive on $\mathbb{F}$, we get that they are homomorphisms on $\mathbb{F}$. 

Suming up, there exists homomorphisms $\varphi_{1}, \varphi_{2}\colon \mathbb{F}\to \mathbb{C}$ such that 
\[
f(x)= f(1)\varphi_{1}(x)\varphi_{2}(x)
\qquad 
a_{i}(x)= a_{i}(1)\varphi_{i}(x) 
\qquad 
\left(x\in \mathbb{F}\right)
\]
where the above constants $f(1), a_{1}(1)$ and $a_{2}(1)$ also fulfill $f(1)= a_{1}(1)a_{2}(1)$. 
\end{proof}

\begin{ackn}
The research of E.~Gselmann has been supported by project no.~K134191 that has been
implemented with the support provided by the National Research, Development and Innovation Fund of Hungary, financed under the K{\_}20 funding scheme.
\end{ackn}

\newpage
\noindent
\textbf{Eszter Gselmann} \\
Department of Analysis\\
University of Debrecen\\
P.O. Box 400\\
H-4002 Debrecen\\
Hungary\\
e-mail: gselmann@science.unideb.hu\\
ORCID: 0000-0002-1708-2570
\vspace{1cm}

\noindent
\textbf{Mehak Iqbal}\\
Doctoral School of Mathematical and Computational Sciences\\
University of Debrecen\\
P.O. Box 400\\
H-4002 Debrecen\\
Hungary\\
e-mail: iqbal.mehak@science.unideb.hu\\

\end{document}